\newcommand{\blue}[1]{{\color{blue} #1}}
\numberwithin{equation}{section}
\pgfplotsset{compat=1.13}
\newtheorem{theorem}{Theorem}[section]
\newtheorem{lemma}[theorem]{Lemma}
\newtheorem{definition}[theorem]{Definition}
\newtheorem{assumption}[theorem]{Assumption}
\newcommand{\B}{\mathcal{B}}
\newcommand{\R}{\mathbb{R}}
\newcommand{\Rcp}{\mathcal{R}_{\text{cp}}}
\newcommand{\N}{\mathbb{N}}
\newcommand{\Sph}{\mathbb{S}}
\newcommand{\A}{\mathcal{A}}
\newcommand{\cS}{\mathcal{S}}
\newcommand{\cT}{\mathcal{T}}
\newcommand{\cN}{\mathcal{N}}
\newcommand{\Omgdel}{\Omega_{\delta}}
\newcommand{\M}{\mathbb{M}}
\renewcommand{\d}{\mathrm{d}}
\newcommand{\Hawaii}{Hawai\kern.05em`\kern.05em\relax i }
\title[Inverse inequality]{Inverse inequalities for kernel-based approximation on bounded domains and Riemannian manifolds}
\author{Zhengjie Sun}
\address{School of Mathematics and Statistics,  Nanjing University of Science and Technology, Nanjing, China}
\email{zhengjiesun@njust.edu.cn}
\author{Leevan Ling}
\address{Department of Mathematics, Hong Kong Baptist University, Kowloon Tong, Hong Kong}
\email{lling@hkbu.edu.hk}
\date{\today}
\begin{document}

	\begin{abstract}
		This paper establishes inverse inequalities for kernel-based approximation spaces defined on bounded Lipschitz domains in $\mathbb{R}^d$ and compact Riemannian manifolds. While inverse inequalities are well-studied for polynomial spaces, their extension to kernel-based trial spaces poses significant challenges.
		For bounded Lipschitz domains, we extend prior Bernstein inequalities, which only apply to a limited range of Sobolev orders, to all orders on the lower bound and $L_2$ on the upper, and derive Nikolskii inequalities that bound $L_\infty$ norms by $L_2$ norms.
		Our theory achieves the desired form but may require slightly more smoothness on the kernel than the regular $>d/2$ assumption.
		For compact Riemannian manifolds, we focus on restricted kernels, which are defined as the restriction of positive definite kernels from the ambient Euclidean space to the manifold, and prove their counterparts.
	\end{abstract}
	\keywords{Nikolskii inequality, restricted kernel, manifolds, sampling inequalities}
	\subjclass{
		41A17 
		65D05  
		65D12 
}
\maketitle
%
\section{Introduction}
\label{S_Introduction}
Inverse inequalities (or inverse estimates) are fundamental tools in numerical analysis with broad applications in finite element methods \cite{dahmen-2004MCoM-inverse,georgoulis-2008MCoM-inverse,graham-2005IMAJNA-finite} and approximation theory \cite{hangelbroek-2018MCoM-inverse,Narcowich+WardETAL-SoboErroEstiBern:06,schaback-2002MCoM-inverse,ward-2012JAT-lp}. These inequalities establish rigorous relationships between functional norms within finite-dimensional approximation spaces, providing essential foundations for analyzing numerical stability and convergence. Specifically, they bound stronger norms (e.g., Sobolev norms) by weaker norms (e.g., $L_2$ norms), with explicit dependence on discretization parameters such as mesh size or polynomial degree.

Classical inverse inequality theory for polynomial spaces is well-established \cite{borwein2012polynomials,dai-2006JFA-multivariate,ditzian-2016ConstrApprox-nikolskii,migliorati-2015JAT-multivariate,milovanovic1994topics,Nikolskii1974}, tracing back to foundational work by Markov and Bernstein \cite{borwein2012polynomials,cheney-2009course}. Despite their theoretical utility, polynomial approximations face inherent limitations including oscillatory behavior in high degrees and geometric constraints. The growing importance of manifold-based problems in machine learning, geometric analysis, and data science has consequently driven interest in more flexible approaches. Kernel-based methods have emerged as powerful alternatives due to their geometric adaptability
\cite{chen2020extrinsic,fasshauer2015kernel,fuselier2013high,jayasumana-2015TPAMI-kernel,lin-2024SINUM-kernel,lin-2025MCoM-distributed,narcowich2017novel,Wendland,wendland-Adv-2020solving,wenzel-2024SISC-data}.

Extending inverse estimates to kernel-based approximation spaces presents significant challenges. A clear understanding of the existing landscape of inverse inequalities is crucial for identifying these challenges, motivating new work, and contextualizing advances. To this end, Section \ref{Sec:InvIneqReview} provides a comprehensive review of the current state of the art, including Bernstein inequalities, Nikolskii inequalities, and inverse theorems. This review highlights the progress made in radial basis function (RBF) approximation, and establishes a foundation for our contributions while outlining essential directions for future research.
While substantial progress has been achieved in deriving inverse estimates for RBF approximation in Euclidean spaces, generalizing these results to bounded Lipschitz domains and complex manifolds remains problematic. For general manifolds, intrinsically defined positive definite kernels enable the construction of Lagrange and local Lagrange functions \cite{hangelbroek2018direct,lehoucq2016meshless,sun-2024SISC-high}, which can be instrumental in establishing inverse inequalities. However, these kernels typically lack closed-form representations because they arise as fundamental solutions to elliptic operators of the form
$\mathcal{L}=\sum_{j=0}^m(\nabla_{\M}^j)^*\nabla_{\M}^j$, where $\nabla_{\M}$ denotes the covariant derivative and $(\nabla^j_{\M})^*$ its adjoint operator.  Deriving such fundamental solutions is highly nontrivial, even for relatively simple manifolds. Additionally, the absence of a closed-form atlas for $\M$ often precludes solving the coordinate representation of the fundamental solution equation. Moreover, in applications such as learning theory and computational fluid dynamics, the underlying manifolds are frequently unknown, poorly characterized, or exhibit high geometric and topological complexity, making it difficult to construct intrinsically geometric kernels.

As a practical alternative, researchers have developed the restriction approach for constructing positive definite kernels on manifolds.  Given $\phi_m:\R^{d} \times \R^{d} \to \R$, the kernel restricted to the $d_\M$-dimensional manifold $\M\subset\R^d$ is defined as
$\psi_\tau(\cdot,\cdot):=\phi_m(\cdot,\cdot)|_{\M\times\M}$ for $\tau=m-(d-d_\M)/2$.
This approach has been rigorously investigated in the literature. Narcowich et al. \cite{narcowich2007approximation} provided a connection between the Fourier transforms of radial kernels and the Fourier-Legendre coefficients of their spherical restrictions. Fuselier and Wright \cite{fuselier2012scattered} derived error estimates for scattered data interpolation on embedded submanifolds. These results demonstrate that restricted kernels preserve positive definiteness and key approximation properties while offering practical utility in applications. Nevertheless, a critical gap persists: \emph{inverse estimates for restricted kernels remain unestablished}.

The remainder of this work is structured as follows. Section \ref{sec:backgroud} introduces the necessary notation, including manifold geometry, kernel methods, Sobolev spaces, and point set distributions.
A survey of prior work in Section \ref{Sec:InvIneqReview} motivates the challenges addressed in this paper. We provide a detailed review of the development of inverse estimates in kernel approximation, covering Bernstein inequalities and Nikolskii inequalities
Section \ref{sec:NikolskiiIneq_BoundedDomains} focuses on bounded Lipschitz domains, developing Bernstein-type inequalities through an interpolation inequality and a stability result that connects continuous and discrete norms.

The first objective is to establish inverse inequalities within finite-dimensional approximation spaces constructed using positive definite
Sobolev space reproducing kernels in bounded Lipschitz domains and
their restrictions on Riemannian manifolds. For bounded Lipschitz domains, we employ an inverse inequality established in \cite{Cheung+LingETAL-leaskerncollmeth:18}, which connects $H^m(\Omega)$ and $H^{\alpha}(\Omega)$ for $\alpha \in (d/2, m]$ with $\alpha, m \in \mathbb{N}$, and extend this to the case $\alpha, m \in \mathbb{R}$. By combining this with a Gagliardo–Nirenberg-type interpolation inequality, we derive a Bernstein inequality that bounds the $H^{\alpha}(\Omega)$-norm of trial functions in $V_{X,\phi_m,\Omega}$ in terms of their $H^t(\Omega)$-norm for any $\alpha \in (d/2, m]$ and $t \in [0, \alpha]$. Furthermore, using the sampling inequality and a recently proposed stability result from \cite{wenzel2024sharp}, we extend the Bernstein inequality to any $0 \leq s \leq \lfloor m \rfloor$, $s \in \mathbb{R}$. Consequently, for any trial function $u$ associated with positive definite kernels defined in Section 2.1, we establish the Bernstein inequality
\[
|u|_{H^s(\Omega)}\leq  C q_{X,\Omega}^{-s}\|u\|_{L_2(\Omega)},
\]
for two distinct cases: (1) $d/2 < s \leq m$, $s \in \mathbb{R}$; (2) $0 \leq s \leq \lfloor m \rfloor$, $s \in \mathbb{R}$.
It should be noted that, by assuming greater smoothness $m \geq (d+1)/2$ for odd dimensions $d$, the Bernstein inequality can be obtained for any $s \in [0, m]$ and all dimensions.

Section \ref{sec:Nikolskii_Manifold} extends these analytical tools to restricted kernels on manifolds. We introduce a diffeomorphic map that connects the embedded manifold to its ambient space. This framework allows us to transfer kernel properties from the ambient Euclidean space to the manifold, enabling the derivation of analogous results for restricted kernels, and yields the Bernstein and Nikolskii inequalities in the manifold setting.
Finally, we conclude the paper in Section \ref{sec:Conclusion}.

\section{Notation and preliminaries}
\label{sec:backgroud}
In this section, we provide a basic background on the manifold and kernel used in this article. Throughout this paper, let $\M\subset \R^{d}$ be a connected, compact and smooth Riemannian manifold. We denote the dimension of $\M$ by $d_{\M}$. The topology of $\M$ is naturally induced by the Euclidean metric and is locally identified with $\R^{d}$ via a collection of smoothly compatible coordinate charts.  To study approximation on manifolds, we formulate our results in terms of the intrinsic mesh norm and the separation radius on the manifold. Note that the node sets we consider lie in multiple metric spaces simultaneously, namely the bounded domain $\Omega$, the manifold $\M$ and the Euclidean space $\R^{d}$. To formalize this, we consider a finite node set $X=\{x_1,x_2,\ldots,x_N\}$ from a metric space $\cS$. The \emph{mesh norm} (or \emph{fill distance}) of the points is defined as
$$h_{X,\cS}:=\sup_{x\in\cS}\min_{x_j\in X}\text{dist}_{\cS}(x,x_j),$$
where $\text{dist}_{\cS}(x,y)$ is the distance metric between points $x$ and $y$ intrinsic to $\cS$. Another important measure is the \emph{separation radius}, given by
$$q_{X,\cS}:=\frac{1}{2}\min_{x_j,x_k\in X, j\neq k}\text{dist}_{\cS}(x_j,x_k).$$
The \emph{mesh ratio} is then defined as $\rho_{X,\cS}:=h_{X,\cS}/q_{X,\cS}$. The mesh ratio quantifies the uniformity of the distribution of points. When the mesh ratio is close to 1, the points in $X$ are considered to be quasi-uniformly distributed.

Let $\Omega\subset\R^{d}$ be a bounded domain satisfying an interior cone condition with angle $\theta\in(0,\pi/2)$ and radius $r>0$. We will analyze the function from the Sobolev spaces. The Sobolev space $W_p^{m}(\Omega)$ for $1\leq p<\infty$ and $m\in\N_0$ is defined as
$W_p^{m}(\Omega):=\{f\in L_p(\Omega):\|f\|_{W_p^{m}(\Omega)}<\infty\}$ via the Sobolev norm
$$\|f\|_{W_p^{m}(\Omega)}:=\Big(\sum_{|\nu|\leq m}\|D^{\nu}f\|_{L_p(\Omega)}^p\Big)^{1/p},
\quad \mbox{and} \quad 
\|f\|_{W_{\infty}^{m}(\Omega)}:=\max_{|\nu|\leq m}\|D^{\nu}f\|_{L_{\infty}(\Omega)}.$$
For Sobolev spaces of fractional order with $m=k+t$, $k\in\N_0$, $0<t<1$, we define
$$\|f\|_{W_p^{k+t}(\Omega)}=\Big(\|f\|_{W_p^k(\Omega)}^p+\sum_{|\nu|=k}\int_{\Omega}\int_{\Omega}\frac{|D^{\nu}f(x)-D^{\nu}f(y)|^p}{\|x-y\|_2^{d+pt}}\d x\d y\Big)^{1/p}.$$

On manifolds, Sobolev spaces can be defined in several equivalent ways. Here, we define them using an atlas. Let $\tilde{\A}=\{(\tilde{U}_j,\tilde{\varphi}_j)\}_{j=1}^L$ be an atlas of slice charts for $\M$, and let $\A=\{U_j,\varphi_j\}_{j=1}^L$ be the associated intrinsic atlas. Let $\{\chi_j\}$ be a partition of unity subordinate to $\{\tilde{U}_j\}$. For a function $f$ defined on $\M$, we define the projections $\pi_j(f):\R^{d_{\M}}\rightarrow\R$ as
\begin{equation}\label{eq:DefProj}
	\pi_j(f)(y)=
	\begin{cases}
		\chi_j (f)(\varphi^{-1}_j(y)),~~&\text{if}~y\in B'(0,r_j),\\
		0,~~&\text{otherwise},
	\end{cases}
\end{equation}
where $B'(0, r_j)$ is a ball in $\R^{d_{\M}}$ corresponding to the slice chart $\tilde{U}_j$.
With this construction, the Sobolev space $W_p^m(\M)$ for $1 \leq p < \infty$ and $m\in\N_0$ can be defined via the norm
$$\|f\|_{W_p^{m}(\M)}:= \Big(\sum_{j=1}^L\|\pi_j(f)\|_{W_p^m(\R^d)}^p\Big)^{1/p}.$$
While the norm depends on the choice of atlas $\tilde{\A}$ and the partition of unity ${\chi_j}$, different choices lead to equivalent norms and define the same Sobolev space. In particular, when $p = 2$, we denote $H^m(\M) := W_2^m(\M)$. For any function defined on a finite set $X$ of $N$ points, we define the discrete norm as
\begin{equation*}
	\|f\|_{\ell_\varrho(X)}=\left\{
	\begin{aligned}
		&\Big(\sum_{j=1}^{N}|f(x_j)|^\varrho\Big)^{1/\varrho},~~&&\text{if}~1\leq \varrho<\infty,\\
		&\max_{x_j\in X}|f(x_j)|,~~&&\text{if}~\varrho=\infty.
	\end{aligned}\right.
\end{equation*}
We note that our definition of the discrete norm differs from certain conventions in the literature that include a normalization factor of $1/N$ on the right-hand side \cite{wenzel2024sharp}.

Outside of theorem statements, for non–negative quantities \(A\) and \(B\), we sometimes use the notation
\begin{align*}
	A \;\lesssim\; B
	\quad\Longleftrightarrow\quad
	A \le c\,B,
\end{align*}
where the multiplicative factor $c>0$ is independent of the
discretization parameters. Additionally, $c=c(\,\cdot\,)$ is recorded at the appearance of  $\lesssim$ to indicate its precise dependence on fixed
geometric or analytic data. Moreover, the symbol $A\sim B$ indicates that there exists two generic constants $0<c_1<c_2<\infty$ such that $c_1B\leq A\leq c_2B$.

\subsection{Sobolev space reproducing kernel and restricted kernel}
We begin by introducing positive definite kernels on Euclidean domains and then extend the discussion to restricted kernels on manifolds. A function $\phi : \R^{d} \times \R^{d} \to \R$ is called a positive definite kernel if, for any finite set of points $\{x_1, \dots, x_N\} \subset \R^{d}$ and any set of coefficients $\{a_1, \dots, a_N\} \subset \R$, the quadratic form satisfies
$$\sum_i\sum_j a_ia_j\phi(x_i,x_j)\geq 0.$$
A common class of such kernels is the RBFs, which take the form $\phi(x, y) = \phi(\|x - y\|)$ depending only on the distance between their arguments. The decay of the Fourier transform of a kernel $\psi$ determines the smoothness of the functions in its associated reproducing kernel Hilbert space, often referred to as the native space. For example, if the Fourier transform $\widehat{\phi}_m(\xi)$ satisfies the decay condition
\begin{equation}\label{eq:FourierDecay}
	\widehat{\phi}_m(\xi)\sim (1+\|\xi\|_2^2)^{-m}  \quad\mbox{for some }m>d/2,
\end{equation}
then the native space $\cN_{\phi_m}$ of the kernel $\phi_m$ is equivalent to the Sobolev space $H^m(\R^{d})$.

Given the kernel $\phi_m$ and a finite discrete set $X\subset\Omega\subset\R^{d}$, we can construct the finite-dimensional approximation space as
\begin{equation}\label{eq:finite_dim_approx_space}
	V_{X,\phi_m,\Omega}:=\text{span}\{\phi_m(\cdot,x_j):x_j\in X\}.
\end{equation}
To extend the framework of positive definite kernels to manifolds $\psi_\tau(\cdot,\cdot):=\phi_m(\cdot,\cdot)|_{\M\times\M}$,  we consider the restriction of kernels defined on the ambient space $\R^{d}$ onto a compact manifold $\M \subset \R^{d}$ of dimension $d_{\M}$, and can define the approximation space $V_{X,\psi_\tau,\M}$ similarly as
\begin{equation}\label{eq:approx_space_restricted}
	V_{X,\psi_\tau,\M}:=\text{span}\{\psi_\tau(\cdot,x_j):x_j\in X\subset \M\}
	=\text{span}\{\phi_m(\cdot,x_j):x_j\in X\subset \M\}.
\end{equation}
The smoothness of the functions in $V_{X,\psi_\tau,\M}$ depends solely on the smoothness of the kernel $\psi_\tau$. This property allows us to construct approximation spaces with arbitrary smoothness by selecting kernels with the desired level of smoothness. Consequently, this flexibility enables the development of approximation spaces that achieve arbitrarily high approximation orders. The restricted kernel $\psi_\tau$ inherits positive definiteness from $\phi_m$ and induces a native space $\cN_{\psi_\tau}$ on $\M$. Fuselier et al. \cite{fuselier2012scattered} derived a connection between the native spaces of the original kernel and the restricted kernel, as described in the following result.
\begin{lemma}\label{lem:NativeSpace}
	Let $\phi_m$ satisfying the decay condition \eqref{eq:FourierDecay}, then there exists a natural linear extension operator $E_\M:\cN_{\psi_\tau}\rightarrow\cN_{\phi_m}$ such that $E_\M f|_{\M}=f$ and $\|E_\M f\|_{\cN_{\phi_m}}=\|f\|_{\cN_{\psi_\tau}}$. The trace operator $T_{\M}:\cN_{\phi_m}\rightarrow\cN_{\psi_\tau}$ is continuous with the bound $\|T_{\M}\|\leq 1$.
	Moreover, if $\phi_m$ satisfies \eqref{eq:FourierDecay}, then $\cN_{\psi_\tau}$ is equivalent to the Sobolev space $H^{\tau}(\M)$ with $\tau=m-(d-d_{\M})/2$.
\end{lemma}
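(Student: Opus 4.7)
The plan is to construct $E_\M$ directly from the definition of the native spaces as Hilbert completions, deduce the trace bound by a short orthogonal-projection argument, and obtain the Sobolev identification by transferring the ambient equivalence $\cN_{\phi_m}\simeq H^m(\R^d)$ through the classical trace/extension theorem for a smooth compact embedded submanifold.

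For the first two claims, I would realize $\cN_{\phi_m}$ and $\cN_{\psi_\tau}$ as Hilbert completions of $\text{span}\{\phi_m(\cdot,x):x\in\R^d\}$ and $\text{span}\{\psi_\tau(\cdot,x):x\in\M\}$ under their kernel inner products. Since $\psi_\tau(x,y)=\phi_m(x,y)$ for $x,y\in\M$, the assignment
\[
E_\M\Big(\sum_i a_i\,\psi_\tau(\cdot,x_i)\Big):=\sum_i a_i\,\phi_m(\cdot,x_i),\qquad x_i\in\M,
\]
preserves the kernel Gram matrix and hence extends by density to a linear isometry $E_\M:\cN_{\psi_\tau}\to\cN_{\phi_m}$. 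The identity $E_\M f|_\M=f$ holds by the reproducing property on finite sums and persists under the isometric extension because point evaluation on $\M$ is continuous in $\cN_{\psi_\tau}$. For the trace operator, I would decompose any $F\in\cN_{\phi_m}$ as $F=E_\M g + F^\perp$ with $F^\perp$ orthogonal to the image of $E_\M$; since $F^\perp\perp \phi_m(\cdot,y)$ for every $y\in\M$, the reproducing property forces $F^\perp|_\M\equiv 0$, whence $T_\M F = g\in\cN_{\psi_\tau}$ and
\[
\|T_\M F\|_{\cN_{\psi_\tau}}=\|g\|_{\cN_{\psi_\tau}}=\|E_\M g\|_{\cN_{\phi_m}}\leq \|F\|_{\cN_{\phi_m}}.
\]

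The Sobolev identification then rests on three ingredients: (i) the Fourier decay \eqref{eq:FourierDecay} yields the ambient equivalence $\cN_{\phi_m}\simeq H^m(\R^d)$; (ii) since $\M$ is a smooth compact submanifold of codimension $d-d_\M$ with $m>d/2\geq (d-d_\M)/2$, the classical trace theorem provides a bounded operator $H^m(\R^d)\to H^{\tau}(\M)$ with $\tau=m-(d-d_\M)/2>d_\M/2$; (iii) a bounded right inverse (extension) $H^{\tau}(\M)\to H^m(\R^d)$. Combining (i) with the contraction $\|T_\M\|\leq 1$ yields $\cN_{\psi_\tau}\hookrightarrow H^{\tau}(\M)$ continuously. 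Conversely, lifting $g\in H^{\tau}(\M)$ to $G\in H^m(\R^d)\simeq \cN_{\phi_m}$ via (iii) and restricting back gives $g=T_\M G\in\cN_{\psi_\tau}$ with the reverse norm control, closing the equivalence.

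The main technical obstacle is justifying the trace/extension theorem in the atlas-based setup adopted in Section \ref{sec:backgroud}. Once a partition of unity subordinate to the slice charts $\{\tilde{U}_j\}$ is fixed, each piece $\chi_j f$ can be flattened locally by $\tilde{\varphi}_j$ so that $\M\cap \tilde{U}_j$ becomes a coordinate hyperplane in $\R^d$, reducing matters to the standard flat trace/extension between $H^m(\R^d)$ and $H^{\tau}(\R^{d_\M})$. Reassembling the local pieces against $\{\chi_j\}$ and comparing with the atlas-based norm $\|\cdot\|_{W_2^m(\M)}$ defined via the projections $\pi_j$ in \eqref{eq:DefProj} shows that the two Sobolev norms on $\M$ coincide up to constants, whereupon the lemma follows.
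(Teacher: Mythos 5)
The paper does not prove this lemma; it is quoted from Fuselier and Wright \cite{fuselier2012scattered} with no argument given, so there is no in-paper proof to compare against. Your reconstruction follows the standard route used in that reference: the isometric $E_\M$ and contractive $T_\M$ are Aronszajn's restriction theorem for reproducing kernel Hilbert spaces, which applies because the Gram matrix of $\psi_\tau$ at any finite $\{x_i\}\subset\M$ coincides with that of $\phi_m$, and your orthogonal-complement argument showing $F^\perp|_\M\equiv 0$ is exactly the classical mechanism. The Sobolev identification then comes from pairing $\cN_{\phi_m}\simeq H^m(\R^d)$ with the Sobolev trace and extension theorems for a compact embedded submanifold, flattened through the slice charts as you describe. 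One small bookkeeping slip to fix: the continuous embedding $\cN_{\psi_\tau}\hookrightarrow H^\tau(\M)$ is obtained from the isometry of $E_\M$ together with the Sobolev trace (your ingredient (ii)), whereas the contraction $\|T_\M\|\le 1$ together with the Sobolev extension (your ingredient (iii)) is what yields the reverse embedding $H^\tau(\M)\hookrightarrow\cN_{\psi_\tau}$; you have these paired the wrong way round in the penultimate paragraph. With that corrected, the argument is sound and is essentially the proof in the cited source.
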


To formulate a connection between the Sobolev norm and the discrete norm, an inverse inequality is essential for the finite-dimensional approximation space $V_{X,\phi_m,\Omega}$, which enables the bounding of the Sobolev norm of functions in $V_{X,\phi_m,\Omega}$ by a discrete $\ell_2$ norm. The proof, which utilizes the properties of kernel interpolation, is straightforward and appeared frequently in the literature \cite[Theorem 2.3]{schaback-1995AdvComput-error}, \cite[Theorem 3.3]{schaback-2002MCoM-inverse}, \cite[Proposition 6]{wenzel2024sharp}. Here, we provide it below for completeness.

\begin{lemma}\label{lem:InvIneq}
	Let $\Omega\subset\R^{d}$ be a bounded Lipschitz domain and $\phi_m$ be the reproducing kernel for $H^m(\Omega)$ satisfying the decay condition \eqref{eq:FourierDecay}.  Then for any set of pairwise distinct points $X\subset\Omega$ with separation distance $q_{X,\Omega}$, there exists a constant $C=C_{d,\phi_m,\Omega}$ such that
	$$\|u\|_{H^m(\Omega)}\leq Cq_{X,\Omega}^{d/2-m}\|u\|_{\ell_2(X)}
	$$
	holds for all trial functions $u\in V_{X,\phi_m,\Omega}$.
\end{lemma}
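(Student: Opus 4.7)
The plan is to reduce the estimate to two well-known ingredients: the lower bound on the smallest eigenvalue of the kernel interpolation matrix, and the equivalence between the native-space norm of $\phi_m$ and the Sobolev norm $\|\cdot\|_{H^m(\Omega)}$.

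First, I would write an arbitrary trial function as $u(x)=\sum_{j=1}^{N}\alpha_j\,\phi_m(x,x_j)$ and exploit the reproducing property of $\phi_m$ to identify
\[
\|u\|_{\cN_{\phi_m}}^{2}=\alpha^{T}K\alpha,\qquad K_{ij}=\phi_m(x_i,x_j),
\]
together with the elementary identity $u|_{X}=K\alpha$, where $u|_{X}:=(u(x_1),\dots,u(x_N))^{T}$. Substituting $\alpha=K^{-1}u|_{X}$ and applying Cauchy--Schwarz gives
\[
\|u\|_{\cN_{\phi_m}}^{2}=\langle\alpha,u|_{X}\rangle_{\ell_2}\le\|\alpha\|_{\ell_2}\,\|u|_{X}\|_{\ell_2}\le\lambda_{\min}(K)^{-1/2}\|u\|_{\cN_{\phi_m}}\,\|u\|_{\ell_2(X)},
\]
so that $\|u\|_{\cN_{\phi_m}}\le\lambda_{\min}(K)^{-1/2}\|u\|_{\ell_2(X)}$.

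Next, I would invoke the classical Narcowich--Ward / Ball lower bound for Sobolev-type kernels: under the Fourier decay \eqref{eq:FourierDecay}, there is a constant $c=c(d,\phi_m)>0$ depending only on the kernel and the dimension such that
\[
\lambda_{\min}(K)\;\ge\;c\,q_{X,\Omega}^{\,2m-d}.
\]
This bound is dimensional and does not see $\Omega$ beyond its embedding in $\R^d$, since it is proved by constructing a compactly supported ``bump'' in Fourier space that annihilates all frequencies $\|\xi\|_2\gtrsim 1/q_{X,\Omega}$. Inserting it into the estimate above yields
\[
\|u\|_{\cN_{\phi_m}}\;\le\;c^{-1/2}\,q_{X,\Omega}^{\,d/2-m}\,\|u\|_{\ell_2(X)}.
\]

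Finally, since $\phi_m$ is the reproducing kernel of $H^m(\Omega)$, by the norm equivalence $\|v\|_{H^m(\Omega)}\sim\|v\|_{\cN_{\phi_m}}$ (obtained via a Stein-type extension on the Lipschitz domain $\Omega$ and the Fourier characterization of the native space of $\phi_m$ on $\R^d$), there is $C_{\Omega}>0$ with $\|u\|_{H^m(\Omega)}\le C_{\Omega}\|u\|_{\cN_{\phi_m}}$. Combining the two inequalities produces the desired bound with $C=C_{d,\phi_m,\Omega}:=c^{-1/2}C_{\Omega}$.

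The only step that is not purely algebraic is the eigenvalue lower bound; the main obstacle, if one wanted a self-contained proof, would be reproducing this Narcowich--Ward argument, but since the paper cites it directly and labels the lemma ``straightforward,'' I would simply quote it and devote the bulk of the written proof to the three-line Cauchy--Schwarz manipulation and the native-space/Sobolev identification.
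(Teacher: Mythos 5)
Your proof is correct and, although the paper announces that it will give a proof ``for completeness,'' no proof block actually follows Lemma~\ref{lem:InvIneq}; the paper only points to \cite[Theorem 2.3]{schaback-1995AdvComput-error}, \cite[Theorem 3.3]{schaback-2002MCoM-inverse}, and \cite[Proposition 6]{wenzel2024sharp}, and those references all use exactly the argument you give: express $\|u\|_{\cN_{\phi_m}}^2=\alpha^{\!\top}K\alpha$, relate it to $\|u\|_{\ell_2(X)}$ via Cauchy--Schwarz and the eigenvalue bound $\lambda_{\min}(K)\gtrsim q_{X,\Omega}^{2m-d}$ for Sobolev kernels, and finish with $\|\cdot\|_{H^m(\Omega)}\lesssim\|\cdot\|_{\cN_{\phi_m}}$. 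One small simplification you could make: the Cauchy--Schwarz detour through $\|\alpha\|_{\ell_2}$ can be replaced by the one-line identity $\alpha^{\!\top}K\alpha = (u|_X)^{\!\top}K^{-1}(u|_X)\le\lambda_{\min}(K)^{-1}\|u\|_{\ell_2(X)}^2$. Also note that for the last step you do not actually need a Stein extension: since $u=\sum_j\alpha_j\phi_m(\cdot,x_j)$ is globally defined, the trivial restriction bound $\|u\|_{H^m(\Omega)}\le\|u\|_{H^m(\R^d)}$ together with $\cN_{\phi_m}(\R^d)\cong H^m(\R^d)$ already suffices.
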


\smallskip

\section{Survey of inverse inequalities in kernel-based approximation}
\label{Sec:InvIneqReview}
Direct error estimates for scattered data approximation in kernel-based spaces, particularly for radial basis functions, are well-established and widely applied in numerical analysis and scientific computing \cite{Buhmann,fasshauer2007meshfree,Wendland}. However, the theory of inverse estimates for such approximation spaces remains underdeveloped, especially for bounded Lipschitz domains. This section provides a systematic review of the current landscape, highlighting foundational results, key challenges, and geometric limitations that motivate our work and directions for future research.

\subsection{Bernstein inequality} The derivation of inverse estimates often relies on Bernstein-type inequalities in finite-dimensional function spaces, which establish bounds for strong norms (e.g., Sobolev norms) in terms of weaker Lebesgue norms.

Bernstein inequalities are relatively well-developed in boundary-free settings. In Euclidean spaces, for kernel-based approximation spaces $V_{X,\phi_m,\R^{d}}$ associated with kernels $\phi_m$ satisfying the decay condition \eqref{eq:FourierDecay}, Narcowich et al. \cite{Narcowich+WardETAL-SoboErroEstiBern:06} established the fundamental Bernstein inequality for $s\in[0,m]$ that:
\begin{equation}\label{eq:FirstBernIneq}
	\|u\|_{H^{s}(\R^{d})}\leq C  q_{X,\R^{d}}^{-s}\|u\|_{L_2(\R^{d})},
	\quad\mbox{for all }u\in V_{X,\phi_m,\R^{d}}.
\end{equation}
Subsequently, Ward \cite{ward-2012JAT-lp} extended this inequality to general $L_p$ norms. For spherical basis functions (SBFs) defined on the unit sphere $\Sph^d$, analogous Bernstein results from $H^s(\Sph^d)$ to $L_2(\Sph^d)$ have been developed in \cite{narcowich-2007FoCM-direct}.
Mhaskar et al. \cite{mhaskar_2010MCoM_LpBernstein} further generalized these results, deriving $L_p$ Bernstein estimates in Bessel-potential Sobolev spaces for SBFs.

Extending Bernstein-type inequalities to general bounded domains remains highly challenging due to boundary effects that disrupt the smooth interpolation properties of kernels. Rieger \cite{rieger-2008phd-sampling} addressed this issue by introducing the concept of scaled domains to mitigate boundary effects, which is defined as
$\Omega^{-\vartheta}:=\{(1-\vartheta)x:~x\in\Omega\}$.
Given a discrete set $X\subset\Omega^{-2q_X}\subset\Omega$ and kernel $\phi_m$ satisfying Fourier decay property \eqref{eq:FourierDecay}, Rieger proved a Bernstein inequality from $H^m(\R^d)$ to $L_2(\Omega)$.
Although this inverse inequality applies to bounded domains, it is limited by the requirement that the center points must lie within the scaled domain. Moreover, Griebel et al. [21] studied reproducing kernels associated with Sobolev spaces, where the kernels form a tight frame in a Hilbert space. They established an inverse inequality for this setting, showing that the rate is optimal for such kernels. However, this does not surpass the best-known rates for standard Sobolev spaces, and such kernels are rarely used in practice.

Significant progress has been made in this direction by Hangelbroek et al. \cite{hangelbroek2010kernel, hangelbroek2011kernel}, who demonstrated the existence of a family of intrinsic kernels $\kappa_{m,\M}(\cdot,\cdot)$ that are well-suited for interpolation on compact, connected, and smooth Riemannian manifolds $\M\subset\R^{d}$. Given a finite set of quasi-uniformly distributed data sites $X\subset\M$, the Lagrange basis function $\chi_{\xi}$ centered at $\xi\in X$ satisfies the Kronecker delta property $\chi_{\xi}(\zeta)=\delta_{\xi,\zeta},~ \chi_{\xi}\in\text{span}_{\xi\in X}\{\kappa_{m,\M}(\cdot,\xi)\}.$ Hangelbroek et al. \cite{hangelbroek2010kernel} further demonstrated that, for a specific class of kernels, these Lagrange functions are uniformly bounded and decay exponentially away from their centers. To apply the above results for a bounded Lipschitz domain $\Omega\subset\R^{d}$, further modifications are necessary to account for boundary effects. Specifically, the data set $X\subset\Omega$ must be extended to an enlarged set $X\subset\widetilde{X}\subset\R^{d}$ (see more details in \cite[Section 2.3]{hangelbroek-2018MCoM-inverse}).  Using Lagrange functions
$\chi_{\xi,\widetilde{X}}$ generated by kernels $\kappa_{m,\R^{d}}$ over $\widetilde{X}$, and let the approximation space be defined with the Lagrange functions centered at the original point set $X$ as
$\text{span}\{ \chi_{\xi,\widetilde{X}}  \}_{\xi \in X}$,
Hangelbroek et al. \cite{hangelbroek-2018MCoM-inverse} established the $L_p$ Bernstein inequality for this function space.

In a subsequent study, Cheung et al. \cite{Cheung+LingETAL-leaskerncollmeth:18} formulated a Bernstein-type inequality that connects the $H^m(\Omega)$ to $H^{\alpha}(\Omega)$ with $d/2< \alpha\leq m$ with $\alpha\in\N$.
This result forms a foundational framework for our work, leading to the development of a Bernstein-type inequality that links $H^{\alpha}(\Omega)$ to $L_2(\Omega)$ for any $\alpha\in\R$, as presented in Theorem \ref{lem:InvEstimate}.

\subsection{Nikolskii inequality}
The Nikolskii inequality is an essential tool in approximation theory that establishes critical relationships between different (quasi-)norms of a function. Nikolskii-type inequalities have been extensively studied in classical settings, such as algebraic polynomials, trigonometric polynomials, and spherical harmonics 
\cite{dai-2013book-approximation,ditzian-2016ConstrApprox-nikolskii,ganzburg-2017ConApp-sharp,kamzolov1984approximation,mhaskar_2010MCoM_LpBernstein,nessel-1978JAMS-nikolskii,Nikolskii1974,zygmund2002trigonometric}.
In the following, we shall review the recent development of the Nikolskii inequality in kernel-based spaces.

The Nikolskii inequality on the sphere has garnered significant interest due to the absence of boundary effects and the availability of spherical polynomials and specialized basis functions. In the context of thin-plate splines or positive definite SBFs, K\"{u}nemund et al. \cite{kunemund2019high} derived a Nikolskii inequality.  Specifically, for a quasi-uniform point set $X$ on $\Sph^d$ and the associated finite-dimensional approximation space $V_{X,\kappa_m,\Sph^d}:=\text{span}_{\xi\in X}\{\chi_{\xi}\}$ built with the Lagrange functions, the following Nikolskii-type inequality holds:
\begin{equation}\label{eq:Nikolskii_On_sphere}
	\|u\|_{L_q(\Sph^d)}\leq C h_{X,\Sph^d}^{-d\big(\frac{1}{p}-\frac{1}{q}\big)_{+}}\|u\|_{L_p(\Sph^d)},~~u\in V_{X,\kappa_m,\Sph^d}.
\end{equation}

For more general settings, let $\M$ be a compact $d$-dimensional Riemannian manifold, and $\kappa_{m,\M}$ be the kernel considered in \cite{hangelbroek2010kernel} with $m>d/2$, Hangelbroek et al. \cite{hangelbroek2011kernel} proved a similar Nikolskii inequality on manifolds.
As mentioned in the introduction, the kernels used here are intrinsically defined on manifolds and have limited practical applications.

For a bounded Lipschitz domain $\Omega\subset\R^d$, using the extended point set $\widetilde{X}$ to construct the Lagrange functions and considering the approximation space $V_{\widetilde{X},\kappa_m,\Omega}$,
Hangelbroek et al. \cite{hangelbroek-2018MCoM-inverse} derived the Nikolskii inequality for bounded domains. The emergence of localized kernel methods in the works of Fuselier, Narcowich, and others \cite{fuselier2013localized,narcowich2017novel,hangelbroek2018direct,kunemund2019high} has further advanced the understanding of these inequalities. These methods enable the construction of local Lagrange functions using only neighborhood points. Approximation spaces generated by such local Lagrange functions exhibit Bernstein-type and Nikolskii-type inequalities, as shown in \cite{hangelbroek-2018MCoM-inverse,hangelbroek2018direct,hangelbroek-2024generalized,kunemund2019high}.

However, extending these Nikolskii-type inequalities to more general manifolds and even bounded domains remains an open challenge, as noted by Wendland and K\"{u}nemund \cite{wendland-Adv-2020solving}. Such extensions would have significant implications, particularly for meshless Galerkin approximations, as highlighted in \cite[Theorem 2.3]{kunemund2019high} and \cite[Theorem 2.8]{wendland-Adv-2020solving}.

\smallskip

\section{Inverse inequalities in bounded Lipschitz domains}
\label{sec:NikolskiiIneq_BoundedDomains}
In this section, we present new results on inverse inequalities for kernel-based approximation spaces defined on bounded Lipschitz domains. Our approach is built upon three key components: a Bernstein-type inequality for kernel spaces, a sampling inequality for Sobolev semi-norms in bounded domains, and a recently developed discrete stability result that establishes a connection between discrete and continuous function norms.

To lay the groundwork, we first state the following properties about band-limited functions, the proofs of which can be found in \cite[Section 3]{Narcowich+WardETAL-SoboErroEstiBern:06}.
\begin{lemma}\label{lem:Bandlimitedfunc}
	Let $m,\alpha\in \R^+$ with $d/2<\alpha\leq m$. Assume that $\Omega$ is a compact Lipschitz domain and $E_{\Omega}:H^{m}(\Omega)\rightarrow H^{m}(\R^d)$ is a continuous extension operator.
	For any $u\in H^{m}(\Omega)$, there exists 
	$ f_{\sigma,u,\alpha,\Omega}\in \B_{\sigma}:=\{f\in L_2(\R^d):\text{supp}~\widehat{f}\subseteq B(0,\sigma)\}$
	\blue{
		with $\sigma=\kappa_{\alpha,d}\, q_{X,\Omega}^{-1}$ that interpolates $u$ on $X$, 
		\begin{equation}\label{eq:BandlimitedProp_1}
			u|_X = f_{\sigma, u, \alpha,\Omega}|_X,
			\quad  
		\end{equation}
		and satisfies the following:
		\begin{subequations}
			\begin{equation}\label{eq:BandlimitedProp_2}
				\|f_{\sigma, u, \alpha,\Omega}\|_{H^{\alpha}(\R^d)}
				\leq C_{\alpha,m,\Omega}\|u\|_{H^{\alpha}(\Omega)},
				\quad\text{and}
			\end{equation}
			\begin{equation}\label{eq:BandlimitedProp_2b}
				\|u-f_{\sigma, u, \alpha,\Omega}\|_{H^{\alpha}(\Omega)}
				\leq C_{\alpha,m,\Omega}^\prime \,
				q_{X,\Omega}^{m-\alpha}
				\|u\|_{H^{m}(\Omega)}.
			\end{equation}    
		\end{subequations}
		For any real $\beta\in[0,m]$,  
		every $f_\sigma\in\B_{\sigma}$ satisfies the Bernstein inequality
		\begin{equation}\label{eq:BandlimitedProp_3}
			\|f_{\sigma}\|_{H^m(\R^d)}
			\leq 2^{({m-\beta})/{2}}    \max\{1, \sigma^{m-\beta}\}    \|f_{\sigma}\|_{H^{\beta}(\R^d)}. 
		\end{equation}
		In particular, when $\sigma\geq 1$, the above bound simplifies to 
		$C_{m,\beta}\,    
		q_{X,\Omega}^{-(m-\beta)} 
		\|f_{\sigma}\|_{H^{\beta}(\R^d)}$.
	}
\end{lemma}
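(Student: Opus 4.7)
The plan is to handle the four claims in increasing order of difficulty. The Bernstein bound \eqref{eq:BandlimitedProp_3} is a pure Fourier weight comparison, while the interpolant is produced by correcting a low-pass projection of a Sobolev extension of $u$. For \eqref{eq:BandlimitedProp_3}, I would apply Plancherel to write $\|f_\sigma\|_{H^m(\R^d)}^2 = \int(1+\|\xi\|_2^2)^m|\widehat{f}_\sigma(\xi)|^2\,d\xi$, restrict to $B(0,\sigma)$ using the support hypothesis, pull out the pointwise multiplier $(1+\sigma^2)^{m-\beta}$, and recover $\|f_\sigma\|_{H^\beta(\R^d)}^2$ from the remaining $(1+\|\xi\|_2^2)^\beta$. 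The elementary estimate $1+\sigma^2\le 2\max\{1,\sigma^2\}$ then yields the constant $2^{(m-\beta)/2}\max\{1,\sigma^{m-\beta}\}$, with the $\sigma\ge 1$ specialization following immediately.

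For the interpolation claim \eqref{eq:BandlimitedProp_1} and the norm estimates \eqref{eq:BandlimitedProp_2}--\eqref{eq:BandlimitedProp_2b}, I would first set $\tilde u := E_\Omega u \in H^m(\R^d)$, so that $\|\tilde u\|_{H^\beta(\R^d)} \lesssim \|u\|_{H^\beta(\Omega)}$ on all intermediate scales $\beta\in[0,m]$. Define the low-pass projection $P_\sigma$ by $\widehat{P_\sigma v} = \widehat{v}\,\mathbf{1}_{B(0,\sigma)}$. Then $P_\sigma\tilde u\in \B_\sigma$ with $\|P_\sigma\tilde u\|_{H^\alpha(\R^d)}\le\|\tilde u\|_{H^\alpha(\R^d)}$, and the standard cutoff tail estimate gives $\|\tilde u - P_\sigma\tilde u\|_{H^\alpha(\R^d)} \le C\,\sigma^{-(m-\alpha)}\|\tilde u\|_{H^m(\R^d)}$. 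Because $P_\sigma\tilde u$ need not interpolate $u$ on $X$, I would add a correction $g\in\B_\sigma$ chosen so that $g(x_j) = u(x_j) - (P_\sigma\tilde u)(x_j)$ for all $x_j\in X$, and set $f_{\sigma,u,\alpha,\Omega} := P_\sigma\tilde u + g$. This yields \eqref{eq:BandlimitedProp_1}; combining the $H^\alpha$ bound on $P_\sigma\tilde u$ with an $H^\alpha$-norm control on $g$ produces \eqref{eq:BandlimitedProp_2}, while the triangle inequality together with $\sigma\sim q_{X,\Omega}^{-1}$ delivers \eqref{eq:BandlimitedProp_2b}.

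The main obstacle is producing the corrector $g$ with $\|g\|_{H^\alpha(\R^d)}$ controlled by $\|\tilde u - P_\sigma\tilde u\|_{H^\alpha(\R^d)}$, which requires a quantitative Marcinkiewicz--Zygmund/sampling inequality that identifies the threshold $\sigma = \kappa_{\alpha,d}\,q_{X,\Omega}^{-1}$ beyond which $X$ becomes a Riesz sampling set for $\B_\sigma$ in the $H^\alpha$ topology. I would realize this via the reproducing-kernel structure of $(\B_\sigma, \langle\cdot,\cdot\rangle_{H^\alpha})$: since $\alpha>d/2$, Sobolev embedding makes point evaluation continuous on $\B_\sigma$, and the separation $q_{X,\Omega}$ together with Bessel-type decay estimates on translates of the reproducing kernel yield a well-conditioned Gram matrix whose inversion produces $g$ with the desired norm control. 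Translating the pointwise residual bound $|(\tilde u - P_\sigma\tilde u)(x_j)|$ into an $\ell_2(X)$ bound involves the cardinality of $X$ inside unit cubes, also controlled by the separation. Tracking the dependence of $\kappa_{\alpha,d}$ only on $\alpha$ and $d$, independent of $X$, is the delicate quantitative step and is precisely where the existing scattered-data interpolation estimates must be invoked.
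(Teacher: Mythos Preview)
The paper does not supply its own proof of this lemma; it states that ``the proofs of which can be found in \cite[Section 3]{Narcowich+WardETAL-SoboErroEstiBern:06}.'' Your proposal follows essentially the same strategy as that cited reference: the Bernstein bound \eqref{eq:BandlimitedProp_3} via the pointwise Fourier weight comparison $(1+\|\xi\|_2^2)^{m-\beta}\le (1+\sigma^2)^{m-\beta}$ on $B(0,\sigma)$, and the interpolant built as a low-pass projection of a Sobolev extension plus a band-limited corrector $g$, with $\|g\|_{H^\alpha(\R^d)}$ controlled through the norming-set/sampling property of $X$ for $\B_\sigma$ once $\sigma=\kappa_{\alpha,d}\,q_{X,\Omega}^{-1}$. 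Your outline is correct and matches the cited source; the one point worth flagging is that you need the extension $E_\Omega$ to be simultaneously bounded $H^\alpha(\Omega)\to H^\alpha(\R^d)$ (not just at order $m$), which holds for the Stein extension on Lipschitz domains but is not explicit in the lemma's hypotheses.
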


We impose the following assumptions on the domain, the discrete set, and the kernel, which will be frequently used in the subsequent analysis.
\begin{assumption}\label{Assump1}
	Assume that $\Omega\subset\R^{d}$ is a bounded Lipschitz domain satisfying an interior cone condition. Assume further that $X\subset\Omega$ is a quasi-uniform finite set with the fill distance $h_{X,\Omega}$,  separation distance $q_{X,\Omega}$, and mesh ratio $\rho_{X,\Omega}$.
	Finally, let $V_{X,\phi_m,\Omega}$ denote the finite-dimensional approximation space spanned by the translates of the kernel $\phi_m$, centred at the nodes in $X$; the kernel $\phi_m$ satisfies the Fourier–decay condition~\eqref{eq:FourierDecay}.
\end{assumption}

With Lemma \ref{lem:Bandlimitedfunc}, the authors in \cite{Cheung+LingETAL-leaskerncollmeth:18} established a Bernstein-type inverse inequality that relates $\|u\|_{H^m(\Omega)}$ to $\|u\|_{H^{\alpha}(\Omega)}$, where $\alpha$ is an integer satisfying $d/2 < \alpha \leq m$. In fact, using the properties in Lemma \ref{lem:Bandlimitedfunc}, the same result can be extended to the case where $\alpha \in \mathbb{R}^+$. Since we need to inspect the dependency of generic constants in the next section, we provide a concise proof of this result in the following lemma.
\begin{lemma}\label{lem:H2invineq}
	Suppose the Assumption \ref{Assump1} holds.
	Then for any real $\alpha\in (d/2,m]$, there exists a constant $C=C_{d,\phi_m,\alpha,\Omega}>0$ such that
	$$\|u\|_{H^m(\Omega)}\leq Cq_{X,\Omega}^{-m+\alpha}\|u\|_{H^{\alpha}(\Omega)}$$
	holds for all trial functions $u\in V_{X,\phi_m,\Omega}$.
\end{lemma}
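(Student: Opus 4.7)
The plan is to combine the band-limited approximation of Lemma \ref{lem:Bandlimitedfunc} with the variational (minimum-norm) property of kernel interpolants. The guiding idea: every $u\in V_{X,\phi_m,\Omega}$ is the native-space interpolant of any $\cN_{\phi_m}$-function matching its values on $X$, so if one manufactures a band-limited reference $f_\sigma$ interpolating $u$ on $X$ with controlled $H^\alpha(\R^d)$-norm, the classical band-limited Bernstein estimate promotes this $H^\alpha$ bound into an $H^m$ bound with precisely the expected gain $q_{X,\Omega}^{-(m-\alpha)}$.

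Concretely, I would proceed in three steps. First, apply Lemma \ref{lem:Bandlimitedfunc} to $u$ with the given real $\alpha\in(d/2,m]$, producing $f_\sigma\in\B_\sigma$ with $\sigma=\kappa_{\alpha,d}\,q_{X,\Omega}^{-1}$ that matches $u$ on $X$ and satisfies $\|f_\sigma\|_{H^\alpha(\R^d)}\le C\|u\|_{H^\alpha(\Omega)}$ via \eqref{eq:BandlimitedProp_2}. Second, invoke the band-limited Bernstein inequality \eqref{eq:BandlimitedProp_3} with $\beta=\alpha$: for $\sigma\ge 1$ (equivalently $q_{X,\Omega}$ below an $(\alpha,d)$-dependent threshold) this yields $\|f_\sigma\|_{H^m(\R^d)}\le C\,q_{X,\Omega}^{-(m-\alpha)}\|f_\sigma\|_{H^\alpha(\R^d)}$, and the complementary large-$q_{X,\Omega}$ regime (recall that $q_{X,\Omega}$ is bounded above by the diameter of $\Omega$) is absorbed into the final constant. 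Third, use the variational property of kernel interpolation: since $u$ extends naturally to an element of $\cN_{\phi_m}\sim H^m(\R^d)$ as a finite linear combination of kernel translates centred in $\Omega$, and since $f_\sigma$ is another $\cN_{\phi_m}$-representative of the same sample data $u|_X$, the native-space minimum-norm inequality gives $\|u\|_{\cN_{\phi_m}}\le\|f_\sigma\|_{\cN_{\phi_m}}$. Chaining this with the norm equivalence $\cN_{\phi_m}\sim H^m(\R^d)$ (from \eqref{eq:FourierDecay}) and the trivial restriction bound $\|u\|_{H^m(\Omega)}\le\|u\|_{H^m(\R^d)}$ delivers
\[
\|u\|_{H^m(\Omega)}\;\le\;C\,q_{X,\Omega}^{-(m-\alpha)}\|u\|_{H^\alpha(\Omega)},
\]
as required.

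The main obstacle I anticipate is the variational step. One must verify cleanly that elements of $V_{X,\phi_m,\Omega}$ are genuine native-space kernel interpolants of any $\cN_{\phi_m}$-representative of their own samples, and track that the norm equivalences between $\cN_{\phi_m}$ and $H^m(\R^d)$, as well as between ambient and restricted Sobolev norms, pick up constants depending only on $(d,\phi_m,\alpha,\Omega)$. This bookkeeping matters because the lemma is subsequently transported to manifolds in Section \ref{sec:Nikolskii_Manifold}, where uncontrolled constant dependencies would break the transfer. Apart from this, no further sampling-inequality or quadrature machinery is required, which is why the authors advertise the argument as a \emph{concise proof}.
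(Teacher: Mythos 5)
Your proposal is correct and follows essentially the same route as the paper: build $f_\sigma$ from Lemma~\ref{lem:Bandlimitedfunc}, use the band-limited Bernstein estimate \eqref{eq:BandlimitedProp_3} with $\beta=\alpha$, then transfer from $f_\sigma$ to $u$ via the native-space interpolation structure. The paper phrases the last step as a triangle inequality plus orthogonality of the interpolation error, while you invoke the minimum-norm property $\|u\|_{\cN_{\phi_m}}\le\|f_\sigma\|_{\cN_{\phi_m}}$ directly; both are immediate consequences of the same Pythagorean identity in $\cN_{\phi_m}$, so the arguments coincide in substance.
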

\begin{proof}
	By applying Lemma \ref{lem:Bandlimitedfunc}, for any $u_m\in V_{X,\phi_m,\Omega}\subseteq H^m(\Omega)\subseteq H^{\alpha}(\Omega)$ with $\alpha\in (d/2,m]$, there exists a band-limited function $f_\sigma=f_{\sigma,u_m,\alpha,\Omega}\in \B_{\sigma}$ for some $\sigma
	\blue{\sim} q_{X,\Omega}^{-1}$ 
	such that \eqref{eq:BandlimitedProp_1}--\eqref{eq:BandlimitedProp_3} holds, and we have the following estimate
	\begin{equation*}
		\begin{aligned}
			\|u_m\|_{H^m(\Omega)}\leq&~ \|u_m-f_\sigma\|_{H^m(\Omega)}+\|f_\sigma\|_{H^m(\Omega)}\\
			= &~\|I_{X, \phi_m} f_\sigma-f_\sigma\|_{H^m(\Omega)}+\|f_\sigma\|_{H^m(\Omega)}\\
			\leq &~ C_{d,\phi_m,\Omega}\|f_\sigma\|_{H^m(\Omega)},
		\end{aligned}
	\end{equation*}
	where we have used the fact that $u_m=I_{X, \phi_m}f_\sigma$ and the orthogonality of the interpolant $I_{X, \phi_m} f_\sigma$ in the native space.
	To further bound the right-hand side, we use the properties \eqref{eq:BandlimitedProp_3} and \eqref{eq:BandlimitedProp_2} to obtain
	\begin{align*}
		\|f_\sigma\|_{H^m(\Omega)}\leq \|f_\sigma\|_{H^m(\R^d)}
		\lesssim q_{X,\Omega}^{-(m-\alpha)}\|f_\sigma\|_{H^{\alpha}(\R^d)}
		\lesssim 
		q_{X,\Omega}^{-m+\alpha}\|u_m\|_{H^{\alpha}(\Omega)}.
	\end{align*}
	This completes the proof.
\end{proof}

A limitation of the above result is that the right-hand side cannot be expressed in terms of $\|u\|_{H^t(\Omega)}$ for $t\in[0,d/2]$, particularly in the case of $\|u\|_{L_2(\Omega)}$, which is a common feature in traditional Bernstein inequalities. To address this, we employ an interpolation inequality to derive Bernstein-type inverse inequalities for kernel spaces associated with Sobolev spaces of order $\alpha\in (d/2,m]$ without extra smoothness assumption.

\begin{theorem}(\textbf{Bernstein inequality: I}) \label{lem:InvEstimate}
	Suppose the Assumption \ref{Assump1} holds. Then for any real $\alpha\in (d/2,m]$ and real $t\in[0,\alpha]$,
	there exists a constant $C=C_{d,\phi_m,\alpha,t,\Omega}>0$ such that
	\begin{equation}\label{eq:BernsteinIneq_Domain}
		\|u\|_{H^{\alpha}(\Omega)}\leq C q_{X,\Omega}^{-\alpha+t}\|u\|_{H^t(\Omega)}
	\end{equation}
	holds for all trial functions $u\in V_{X,\phi_m,\Omega}$.
\end{theorem}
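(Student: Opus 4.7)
The plan is to combine the previously established Bernstein-type bound from Lemma \ref{lem:H2invineq}, which controls $\|u\|_{H^m(\Omega)}$ by $\|u\|_{H^{\alpha}(\Omega)}$ for $\alpha\in(d/2,m]$, with a Gagliardo--Nirenberg-type interpolation inequality for Sobolev spaces on bounded Lipschitz domains. The intuition is that interpolation trades a power of $H^m$ for a power of $H^t$, and the $H^m$ factor is then reabsorbed using Lemma \ref{lem:H2invineq}, producing a self-improving estimate after rearrangement.

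The case $t=\alpha$ is trivial. I would first treat the case $t<\alpha<m$. Invoke the interpolation inequality
\begin{equation*}
\|u\|_{H^{\alpha}(\Omega)} \leq C\, \|u\|_{H^{m}(\Omega)}^{\theta}\, \|u\|_{H^{t}(\Omega)}^{1-\theta},\qquad \theta=\frac{\alpha-t}{m-t}\in(0,1),
\end{equation*}
which is standard on bounded Lipschitz domains because such domains admit a bounded Sobolev extension operator, so the inequality can be transferred from $\R^{d}$ by real interpolation of the fractional Sobolev scale. Applying Lemma \ref{lem:H2invineq} to the $H^m$-factor gives
\begin{equation*}
\|u\|_{H^{\alpha}(\Omega)} \leq C\, q_{X,\Omega}^{(\alpha-m)\theta}\, \|u\|_{H^{\alpha}(\Omega)}^{\theta}\, \|u\|_{H^{t}(\Omega)}^{1-\theta}.
\end{equation*}
Since $u\in V_{X,\phi_m,\Omega}\subseteq H^m(\Omega)$ is either zero (in which case the claim is trivial) or has a finite positive $H^\alpha$-norm, I divide by $\|u\|_{H^{\alpha}(\Omega)}^{\theta}$ and raise to the power $1/(1-\theta)$. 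A direct computation shows $(\alpha-m)\theta/(1-\theta)=t-\alpha$, which gives the desired estimate for $\alpha<m$.

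For the boundary case $\alpha=m$ with $t<m$, $\theta=1$ and the above argument degenerates. Instead, I would choose an auxiliary order $\alpha^{*}\in(\max\{d/2,t\},m)$; such $\alpha^{*}$ exists because $d/2<m$ and $t<m$. Lemma \ref{lem:H2invineq} yields $\|u\|_{H^{m}(\Omega)} \leq C\, q_{X,\Omega}^{-m+\alpha^{*}}\,\|u\|_{H^{\alpha^{*}}(\Omega)}$, and the case already proven (which applies since $d/2<\alpha^{*}<m$ and $t\leq\alpha^{*}$) gives $\|u\|_{H^{\alpha^{*}}(\Omega)}\leq C\, q_{X,\Omega}^{-\alpha^{*}+t}\,\|u\|_{H^{t}(\Omega)}$. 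Chaining these two bounds cancels the intermediate exponent and produces $\|u\|_{H^{m}(\Omega)}\leq C\, q_{X,\Omega}^{-m+t}\,\|u\|_{H^{t}(\Omega)}$, completing the proof.

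The main obstacle I foresee is ensuring that the Gagliardo--Nirenberg-type interpolation inequality is available in the precise form required: the endpoints $\alpha$, $t$, and $m$ may all be fractional, the domain is only Lipschitz rather than smooth, and the multiplicative constant must be independent of $X$. This is a standard consequence of real interpolation of $H^s(\Omega)$ combined with the existence of a universal extension operator on Lipschitz domains, but the citation (or self-contained verification) of this step is what supports the full fractional range claimed in the theorem; every other step is an algebraic manipulation of exponents.
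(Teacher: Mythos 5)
Your proof is correct and follows essentially the same strategy as the paper: use the Gagliardo--Nirenberg interpolation inequality with endpoints $H^m$ and $H^t$, reabsorb the $H^m$ factor via Lemma~\ref{lem:H2invineq}, and divide out the $H^\alpha$ power; then handle $\alpha=m$ by chaining through an auxiliary intermediate order. The one place where you are slightly more careful than the paper is the choice of the auxiliary order for the endpoint case: you take $\alpha^{*}\in(\max\{d/2,t\},m)$, guaranteeing $t\le\alpha^{*}$ so that the already-proven case is applicable, whereas the paper fixes $\alpha_0=\tfrac12(m+\tfrac d2)$, which leaves the regime $t\in(\alpha_0,m)$ unaddressed in the written argument (though that regime is trivially covered by Lemma~\ref{lem:H2invineq} directly, since then $t>d/2$). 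Your explicit separation of the trivial case $t=\alpha$ and the self-improving rearrangement $\,\theta=(\alpha-t)/(m-t)$, $(\alpha-m)\theta/(1-\theta)=t-\alpha\,$ match the paper's computation exactly.
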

\begin{proof}
	For any $\alpha\in \R^+$ with $d/2<\alpha< m$, Lemma \ref{lem:H2invineq} tells us that
	\begin{equation}\label{eq:InvEst_CL}
		\|u\|_{H^{m}(\Omega)}   
		\lesssim  q_{X,\Omega}^{-m+\alpha}\|u\|_{H^{\alpha}(\Omega)}, ~~\forall u\in V_{X,\phi_m,\Omega}.
	\end{equation}
	By applying the Gagliardo-Nirenberg-type interpolation inequality (see, e.g., \cite[Theorem 1]{brezis2018gagliardo}) with $\alpha:=\theta m+(1-\theta)t$ where $\theta\in(0,1)$, we obtain
	$\|u\|_{H^{\alpha}(\Omega)}\lesssim\|u\|_{H^t(\Omega)}^{1-\theta}\|u\|_{H^{m}(\Omega)}^{\theta}$.
	Substituting \eqref{eq:InvEst_CL} into the right-hand side of this interpolation inequality gives
	$$\|u\|_{H^{\alpha}(\Omega)}\lesssim \|u\|_{H^t(\Omega)}^{1-\theta}\Big(q_{X,\Omega}^{-m+\alpha}\|u\|_{H^{\alpha}(\Omega)}\Big)^{\theta}.$$
	Simplifying and rearranging terms, we get
	$\|u\|_{H^{\alpha}(\Omega)}^{1-\theta} \lesssim q_{X,\Omega}^{(-m+\alpha)\theta}\|u\|_{H^t(\Omega)}^{1-\theta},$
	and after taking the $(1-\theta)$-th root leads to
	\begin{equation}\label{eq:InvEst_CL2}
		\begin{aligned}
			\|u\|_{H^{\alpha}(\Omega)}
			\leq&~C_{d,\phi_m,\alpha,t,\Omega} q_{X,\Omega}^{-\alpha+t}\|u\|_{H^t(\Omega)},~~d/2<\alpha<m,
		\end{aligned}
	\end{equation}
	where we have used the definition of $\alpha$ to get $\frac{(-m+\alpha)\theta}{1-\theta}=(t-m)\theta=-\alpha+t$.

	To complete the proof of \eqref{eq:BernsteinIneq_Domain}, we need to consider $\alpha=m$.  Let $\alpha_0 = \frac1{2}{ (m+\frac{d}2)}>d/2$. With this $\alpha_0$, putting \eqref{eq:InvEst_CL2}  into \eqref{eq:InvEst_CL} yield
	\begin{equation*}
		\begin{aligned}
			\|u\|_{H^{m}(\Omega)}\lesssim &~q_{X,\Omega}^{-m+\alpha_0}\|u\|_{H^{\alpha_0}(\Omega)}
			\lesssim q_{X,\Omega}^{-m+\alpha_0}q_{X,\Omega}^{-\alpha_0+t}\|u\|_{H^t(\Omega)}
			\lesssim  q_{X,\Omega}^{-m+t}\|u\|_{H^t(\Omega)}, 
		\end{aligned}
	\end{equation*}
	for all trial functions with a $C=C_{d,\phi_m,\alpha_0,t,\Omega}$ independent of $\alpha$.
\end{proof}

The above lemma improves the right-hand side of the Bernstein inequality by incorporating $\|u\|_{H^t(\Omega)}$ for any $t\in[0,\alpha]$, without altering the admissible range of $\alpha\in (d/2,m]$.
Furthermore, by applying the sampling inequality from Arcang{'e}li et al. \cite{arcangeli-2012NumerMath-extension}, the admissible range of $\alpha$ on the left-hand side can be extended to include $\alpha\leq d/2$.

\begin{lemma}(\textbf{Sampling inequalities from \cite[Theorem 3.1 and Theorem 3.2]{arcangeli-2012NumerMath-extension}})\label{lem:sampling_ineq}
	Let $\Omega\subset\R^{d}$ be a bounded Lipschitz domain, and let $X\subset\Omega$ be a quasi-uniform finite set with the fill distance $h_{X,\Omega}<h_0$ being sufficiently small. Suppose that $p,\varrho\in [1,\infty]$, $q\in[1,\infty)$, and $m$ satisfies the following conditions
	\begin{equation*}
		m\in \left\{
		\begin{aligned}
			&[d,\infty),~~&&\text{if}~ p=1,\\
			&(d/p,\infty),~~&&\text{if}~ 1<p<\infty,\\
			&\N^+,~~&&\text{if}~ p =\infty.
		\end{aligned}\right.
	\end{equation*}
	Let $\gamma=\max\{p,q,\varrho\}$ and $l\in\N$ be the integer defined by
	\begin{equation}\label{eq:sigmaCondition}
		l=\left\{
		\begin{aligned}
			&l_0,~~&& \text{if}~ m\in \N^+~\text{and}~\text{either}~p<q<\infty ~\text{and}~l_0\in \N,\\
			&~~ && \qquad \text{or}~(p,q)=(1,\infty),~\text{or}~p=q,\\
			&\lceil l_0\rceil-1,\quad &&\text{otherwise},
		\end{aligned}
		\right.
	\end{equation}
	where $l_0= m-d(1/p-1/q)_{+}$.
	Then for any real $s\in [0,l]$, there exist a constant $C=C_{d,m,s,p,q,\varrho,\Omega}>0$
	such that
	\begin{equation}\label{eq:SamplingIneq1}
		|u|_{W_q^{s}(\Omega)}\leq C\big(h_{X,\Omega}^{m-s-d(1/p-1/q)_{+}}\|u\|_{W_p^{m}(\Omega)}+h_{X,\Omega}^{d/\gamma-s}\|u\|_{\ell_\varrho(X)}\big)
	\end{equation}
	holds for all functions $u\in W_p^{m}(\Omega)$.
	Furthermore, for any integer $k\in\N$ such that $0\leq k\leq \lceil m-d/p\rceil-1$, there exist a constant $C=C_{d,m,k,p,\varrho,\Omega}>0$ such that
	\begin{equation}\label{eq:SamplingIneq2}
		|u|_{W_{\infty}^k(\Omega)}\leq C\big(h_{X,\Omega}^{m-k-d/p}\|u\|_{W_p^{m}(\Omega)}+h_{X,\Omega}^{-k}\|u\|_{\ell_\varrho(X)}\big)
	\end{equation}
	holds for all  functions $u\in W_p^{m}(\Omega)$.
\end{lemma}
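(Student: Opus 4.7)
The plan is to follow the standard \emph{local polynomial reproduction} strategy for sampling inequalities, which goes back to Narcowich--Ward--Wendland and was refined by Wendland--Rieger and Arcang\'eli et al. The overall idea is: approximate $u$ locally by a polynomial, control the approximation error via a Bramble--Hilbert argument, and control the polynomial itself by its values at the discrete points via a norming set and Markov inequality argument. First I would cover $\Omega$ by overlapping balls $B_j$ of radius comparable to $h_{X,\Omega}$, using the interior cone condition to embed a star-shaped subregion in each; quasi-uniformity of $X$ then guarantees that every $B_j$ contains $O(1)$ points of $X$ arranged to form a unisolvent set for polynomials of degree less than $m$.

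On each ball $B_j$, I would invoke the Bramble--Hilbert lemma to produce a polynomial $P_j$ of degree less than $m$ with
\[
\|u - P_j\|_{W_p^k(B_j)} \lesssim h_{X,\Omega}^{m-k}\,|u|_{W_p^m(B_j)},\qquad 0\le k\le m,
\]
and split $u = (u-P_j) + P_j$. The error term furnishes the $h_{X,\Omega}^{m-s-d(1/p-1/q)_{+}}\|u\|_{W_p^m(\Omega)}$ contribution in \eqref{eq:SamplingIneq1}, after a local Sobolev embedding $W_p^m(B_j)\hookrightarrow W_q^s(B_j)$ with its scaling factor $h_{X,\Omega}^{d(1/q-1/p)}$ when $p\le q$. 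For the polynomial piece I would use the scaled Markov inequality
\[
|P_j|_{W_q^s(B_j)} \lesssim h_{X,\Omega}^{-s+d/q}\,\|P_j\|_{L_\infty(B_j)},
\]
followed by a norming-set bound $\|P_j\|_{L_\infty(B_j)} \lesssim \max_{x\in X\cap B_j}|P_j(x)|$ for the unisolvent nodes, and then the pointwise triangle inequality
\[
|P_j(x)|\le |u(x)| + \|u-P_j\|_{L_\infty(B_j)},
\]
controlling the last term by Bramble--Hilbert combined with a local embedding into $L_\infty$. Summing the local estimates in $L_q$ and converting between $\ell_\infty$ and $\ell_\varrho$ on the $O(1)$ points per ball, with $\gamma=\max\{p,q,\varrho\}$ governing the net loss, produces the global bound \eqref{eq:SamplingIneq1}.

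The main obstacle is the \emph{sharp bookkeeping of exponents}, particularly the piecewise definition of $l$ in \eqref{eq:sigmaCondition}. The delicate case occurs when $l_0 = m - d(1/p-1/q)_{+}$ is an integer and the embedding $W_p^m\hookrightarrow W_q^{l_0}$ sits at its borderline regularity; one cannot afford to lose a fractional order in the Bramble--Hilbert step, so the polynomial degree must be taken \emph{exactly} $l_0-1$ rather than the generic $\lceil l_0\rceil-1$, and this forces the bifurcation in \eqref{eq:sigmaCondition}. The second inequality \eqref{eq:SamplingIneq2} requires a separate argument: since $q=\infty$, the $L_q$ step must be replaced by a pointwise Sobolev embedding $W_p^m(B_j)\hookrightarrow C^k(B_j)$, which is only valid for $k<m-d/p$, explaining the restriction $k\le\lceil m-d/p\rceil-1$. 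Handling the endpoint cases $p=1$, $p=\infty$, and $q=\infty$ uniformly — together with ensuring all implicit constants depend only on $(d,m,s,p,q,\varrho,\Omega)$ and not on $X$ — is the technical heart of the result.
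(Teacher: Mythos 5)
The paper does not prove this lemma; it is imported verbatim from Arcang\'eli, L\'opez de Silanes and Torrens (Theorems 3.1 and 3.2 of the cited reference), so there is no in-paper argument against which to compare your proposal.

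Your sketch follows the standard local-polynomial-reproduction template (cover $\Omega$ by balls of radius $\sim h_{X,\Omega}$, Bramble--Hilbert on each ball, scaled Markov plus a norming-set bound for the polynomial piece, then sum), and that template is indeed the right starting point. However, it stops short of this particular statement in two concrete places. First, the real content of the lemma is the sharp piecewise definition of $l$ in \eqref{eq:sigmaCondition}: the generic Bramble--Hilbert / Markov / norming-set pipeline naturally yields only the blunt value $l=\lceil l_0\rceil-1$, and while you correctly identify that the exceptional endpoint value $l=l_0$ (when $m\in\N^+$, $l_0\in\N$, $p<q<\infty$, or $(p,q)=(1,\infty)$, or $p=q$) is where the difficulty lies, your sketch only asserts that one must ``take the polynomial degree exactly $l_0-1$'' without supplying the refined local Sobolev embedding and extension argument that actually recovers this borderline case — that refinement is the essential contribution of Arcang\'eli et al., not mere exponent bookkeeping. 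Second, the discrete term $h_{X,\Omega}^{d/\gamma-s}\|u\|_{\ell_\varrho(X)}$ with $\gamma=\max\{p,q,\varrho\}$ is a global $\ell_\varrho$ quantity; your per-ball $\ell_\infty$ pointwise control followed by an unspecified ``converting between $\ell_\infty$ and $\ell_\varrho$ on the $O(1)$ points per ball'' step does not actually derive the exponent $d/\gamma-s$ or explain how the three parameters $p,q,\varrho$ enter through the single quantity $\gamma$. A smaller but related gap: $m$ is allowed to be non-integer when $1<p<\infty$, so ``a polynomial $P_j$ of degree less than $m$'' needs to be replaced by a precise integer degree, and that choice feeds directly into whether you land on $l_0$ or $\lceil l_0\rceil-1$. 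As a high-level outline of the known approach your proposal is coherent, but it does not constitute a proof of this sharp statement.
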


In particular, the following special cases of sampling inequalities are of interest. By taking $p=q=\varrho=2$ in \eqref{eq:SamplingIneq1},
then for any real $s$ with $0\leq s\leq l= \lfloor m\rfloor$,
it holds that
\begin{equation}\label{eq:HighOrderInvIneq0}
	|u|_{H^{s}(\Omega)}\lesssim h_{X,\Omega}^{m-s}\|u\|_{H^{m}(\Omega)}+h_{X,\Omega}^{d/2-s}\|u\|_{\ell_2(X)}.
\end{equation}
And taking $p=\varrho=2$ and $k=0$ in \eqref{eq:SamplingIneq2} yields
\begin{equation}\label{eq:SamplingIneq_Linf}
	\|u\|_{L_{\infty}(\Omega)}\lesssim h_{X,\Omega}^{m-d/2}\|u\|_{H^{m}(\Omega)}+\|u\|_{\ell_2(X)}.
\end{equation}

As a byproduct of the above sampling inequality, we obtain the following stability results. While these results do not play a direct role in this paper, we present them here for potential future applications.
\begin{theorem}(\textbf{Stability on domains})\label{thm:Cont_to_Disc_Domain}
	Suppose the Assumption \ref{Assump1} holds.
	Then for any real $s\in [0, \lfloor m\rfloor ]$,  
	there exists a constant $C=C_{d,\phi_m,s,\Omega}>0$   such that
	\begin{equation}\label{eq:InvIneqHkNorm0}
		|u|_{H^{s}(\Omega)}\leq C(1+\rho_{X,\Omega}^{m-d/2}) h_{X,\Omega}^{d/2-s}\|u\|_{\ell_2(X)},
	\end{equation}
	and another constant $C=C_{d,\phi_m,\Omega}>0$ such that
	\begin{equation}\label{eq:InvIneqLinf0}
		\|u\|_{L_{\infty}(\Omega)}\leq C(1+\rho_{X,\Omega}^{m-d/2})\|u\|_{\ell_2(X)}
	\end{equation}
	holds for all trial functions $u\in V_{X,\phi_m,\Omega}$.
\end{theorem}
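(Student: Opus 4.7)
The plan is to obtain both stability estimates by combining the sampling inequalities from Lemma~\ref{lem:sampling_ineq} (specifically, the specialized forms \eqref{eq:HighOrderInvIneq0} and \eqref{eq:SamplingIneq_Linf}) with the kernel inverse inequality of Lemma~\ref{lem:InvIneq}, which controls the strongest norm on the trial space by the discrete $\ell_2$ norm,
\begin{equation*}
\|u\|_{H^m(\Omega)}\leq C\,q_{X,\Omega}^{d/2-m}\|u\|_{\ell_2(X)},\qquad u\in V_{X,\phi_m,\Omega}.
\end{equation*}
The sampling inequality bounds the weaker target norm by a mixture of $\|u\|_{H^m(\Omega)}$ and $\|u\|_{\ell_2(X)}$, and inserting the inverse inequality into the $H^m$ term converts the mixed factor $h_{X,\Omega}^{m-s}q_{X,\Omega}^{d/2-m}$ into $h_{X,\Omega}^{d/2-s}\rho_{X,\Omega}^{m-d/2}$, which already has the shape advertised in the statement.

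For \eqref{eq:InvIneqHkNorm0}, first I would check that the choice $p=q=\varrho=2$ in Lemma~\ref{lem:sampling_ineq} produces $l=\lfloor m\rfloor$, using the alternative branches of definition \eqref{eq:sigmaCondition} according to whether $m$ is an integer, so that \eqref{eq:HighOrderInvIneq0} is available for all real $s\in[0,\lfloor m\rfloor]$. Then for $u\in V_{X,\phi_m,\Omega}\subset H^m(\Omega)$, substituting Lemma~\ref{lem:InvIneq} into the first term of \eqref{eq:HighOrderInvIneq0} gives
\begin{equation*}
|u|_{H^s(\Omega)}\lesssim h_{X,\Omega}^{m-s}q_{X,\Omega}^{d/2-m}\|u\|_{\ell_2(X)}+h_{X,\Omega}^{d/2-s}\|u\|_{\ell_2(X)},
\end{equation*}
and factoring $h_{X,\Omega}^{d/2-s}$ out of the first term, together with the identity $h_{X,\Omega}^{m-d/2}q_{X,\Omega}^{-(m-d/2)}=\rho_{X,\Omega}^{m-d/2}$, delivers the bound $(1+\rho_{X,\Omega}^{m-d/2})\,h_{X,\Omega}^{d/2-s}\|u\|_{\ell_2(X)}$.

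For \eqref{eq:InvIneqLinf0} I would follow the same pattern starting from \eqref{eq:SamplingIneq_Linf}: the choice $p=\varrho=2$, $k=0$ is admissible because the hypothesis $m>d/2$ forces $\lceil m-d/2\rceil-1\geq 0$. Inserting Lemma~\ref{lem:InvIneq} into the $H^m$ term reduces $h_{X,\Omega}^{m-d/2}q_{X,\Omega}^{d/2-m}$ directly to $\rho_{X,\Omega}^{m-d/2}$, and adding the standalone $\|u\|_{\ell_2(X)}$ contribution from \eqref{eq:SamplingIneq_Linf} produces the advertised prefactor $(1+\rho_{X,\Omega}^{m-d/2})$.

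Since both conclusions reduce to algebraic bookkeeping once the two ingredient estimates are in hand, I do not expect any genuine obstacle. The only care required is in matching the admissible range of $s$ to the value of $l$ given by \eqref{eq:sigmaCondition} for non-integer $m$ (where $l=\lceil m\rceil-1=\lfloor m\rfloor$), and in tracking the constants' dependence on $d$, $\phi_m$, $s$, and $\Omega$ through the two lemmas. The smallness condition $h_{X,\Omega}<h_0$ inherited from Lemma~\ref{lem:sampling_ineq} and the quasi-uniformity assumption on $X$ are absorbed into the constant in the standard way.
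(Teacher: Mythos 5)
Your proposal follows the same route as the paper's proof: specialize the sampling inequalities from Lemma~\ref{lem:sampling_ineq} to $p=q=\varrho=2$ (and $k=0$ for the $L_\infty$ case), then substitute the kernel inverse inequality of Lemma~\ref{lem:InvIneq} for the $\|u\|_{H^m(\Omega)}$ term and rewrite $h_{X,\Omega}^{m-d/2}q_{X,\Omega}^{d/2-m}$ as $\rho_{X,\Omega}^{m-d/2}$. The algebraic bookkeeping and the identification $l=\lfloor m\rfloor$ are correct, so this is essentially identical to the paper's argument.
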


\begin{proof}
	On the right-hand side of \eqref{eq:HighOrderInvIneq0}, we can use the inverse inequality from Lemma \ref{lem:InvIneq} to bound the term $\|u\|_{H^{m}(\Omega)}$, which gives
	\begin{equation}\label{eq:DiscInvIneq_Hm_ell2}
		\|u\|_{H^{m}(\Omega)}
		\lesssim q_{X,\Omega}^{d/2-m}\|u\|_{\ell_2(X)}.
	\end{equation}
	Substituting \eqref{eq:DiscInvIneq_Hm_ell2} into \eqref{eq:HighOrderInvIneq0}, we obtain
	\begin{equation*}
		\begin{aligned}
			|u|_{H^s(\Omega)}\lesssim ~&h_{X,\Omega}^{m-s}q_{X,\Omega}^{d/2-m}\|u\|_{\ell_2(X)}+h_{X,\Omega}^{d/2-s}\|u\|_{\ell_2(X)}
			\lesssim (1+\rho_{X,\Omega}^{m-d/2})h_{X,\Omega}^{d/2-s}\|u\|_{\ell_2(X)}.
		\end{aligned}
	\end{equation*}
	Similarly, Substituting \eqref{eq:DiscInvIneq_Hm_ell2} into \eqref{eq:SamplingIneq_Linf}, we have
	$$\|u\|_{L_{\infty}(\Omega)}
	\lesssim 
	\Big( h_{X,\Omega}^{m-d/2}q_{X,\Omega}^{d/2-m}\|u\|_{\ell_2(X)}+\|u\|_{\ell_2(X)}\Big).$$
	This completes the proof.
\end{proof}

The crucial connection between the discrete $\ell_2(X)$ norms and the continuous $L_2(\Omega)$ norms relies on approximating integrals over bounded Lipschitz domains via quadrature rules with sufficiently dense node distributions. For bounded domains in $\R^{d}$, such results have been explored in \cite{wenzel2024sharp} using a geometrically greedy algorithm defined relative to a reference set. For completeness, we restate the key result below.
\begin{lemma}(\textbf{$\bm{\ell_2}$-stability}, \cite[Satz 2.1.6--7]{muller2009komplexitat} \& \cite[Thm. 7]{wenzel2024sharp})\label{lem:disctIneq}
	Suppose the Assumption \ref{Assump1} holds
	with an interior cone angle $\theta\in (0,\pi/2)$ and radius $r>0$.
	Given any reference set $Y_0$,
	we have the following estimates
	\begin{equation}
		h_{Y_0,\Omega}\geq c_{\Omega}N_{Y_0}^{-1/d},~\mbox{ and }~q_{Y_0,\Omega}\leq C_{\Omega}N_{Y_0}^{-1/d},
	\end{equation}
	with constants 
	\begin{equation}\label{eq:cOmega}
		c_{\Omega}=\pi^{-1/2}\Big(\mathrm{vol}(\Omega)\Gamma\big(\frac{d}{2}+1\big)\Big)^{1/d},
		~~C_{\Omega}=\big(\frac{2\pi}{\theta}\big)^{1/d}\pi^{-1/2}\Big(\mathrm{vol}(\Omega)\Gamma\big(\frac{d}{2}+1\big)\Big)^{1/d}.
	\end{equation}
	For any bounded function $g\in C(\Omega)\cap L_2(\Omega)$, and any constant $q_1\leq \min\big\{\frac{2^{-1/d}c_{\Omega}}{7C_{\Omega}}q_{Y_0},\frac{2}{5}r\big\}$, there exists a finite set of points $Y_{1,g}\subset\Omega$ with $\frac{1}{3}q_1\leq q_{Y_{1,g}}\leq h_{Y_{1,g}}\leq \frac{22}{3}q_1$, such that the following inequality holds
	$$\|g\|_{\ell_2(Y_{1,g})}\leq \sqrt{\tilde{C}_{d,\theta}}N_{Y_{1,g}}^{1/2}\|g\|_{L_2(\Omega)},$$
	where $\tilde{C}_{d,\theta}=4\frac{16^dC_dC_{\Omega}^d}{c_{\Omega}^{2d}C_{d,\theta}^2}$, with $C_d$ being the volume of the unit ball and $C_{d,\theta}$ the volume of the unit cone $C(x, \xi(x), \theta, 1)$.
\end{lemma}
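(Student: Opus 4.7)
The plan is to treat the statement in two halves: (i) the purely volumetric estimates on the fill $h_{Y_0,\Omega}$ and separation $q_{Y_0,\Omega}$ for an arbitrary reference set, and (ii) the existence of the $g$-adapted set $Y_{1,g}$ together with the sampling inequality. I would handle the halves in this order because the geometric constants $c_{\Omega}, C_{\Omega}$ produced in (i) reappear in the admissibility bound on $q_1$ and in the final constant $\tilde{C}_{d,\theta}$.

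For part (i), the fill-distance lower bound comes from a covering argument: the closed balls $\overline{B}(x_j,h_{Y_0,\Omega})$ centred at the $N_{Y_0}$ nodes cover $\Omega$, so $N_{Y_0}\cdot C_d\, h_{Y_0,\Omega}^d \geq \mathrm{vol}(\Omega)$, which after substituting $C_d=\pi^{d/2}/\Gamma(d/2+1)$ is exactly $h_{Y_0,\Omega}\geq c_\Omega N_{Y_0}^{-1/d}$. For the separation upper bound I would use the interior cone condition: at each $x_j$ the open cone $C(x_j,\xi(x_j),\theta,q_{Y_0,\Omega})$ is contained in $B(x_j,q_{Y_0,\Omega})\cap \Omega$, these cones are pairwise disjoint (by the very definition of separation radius), and each has volume $C_{d,\theta}\,q_{Y_0,\Omega}^d$. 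Packing them inside $\Omega$ and rearranging gives $q_{Y_0,\Omega}\leq C_\Omega N_{Y_0}^{-1/d}$, with the extra $(2\pi/\theta)^{1/d}$ factor arising from the ratio $C_d/C_{d,\theta}$.

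For part (ii), I would construct $Y_{1,g}$ by a greedy, $g$-aware algorithm. Having selected $\{y_1,\ldots,y_k\}$, if the candidate region $\Omega_k:=\{y\in\Omega:\mathrm{dist}(y,\{y_1,\ldots,y_k\})\geq q_1\}$ is nonempty, pick a root point $y^{*}\in\Omega_k$ and consider an interior cone $R_{k+1}\subset\Omega$ of diameter comparable to $q_1$ (this fits in $\Omega$ because of the condition $q_1\leq \tfrac{2}{5}r$). Since $|g|^{2}$ is continuous, the mean value theorem supplies a point $y_{k+1}\in R_{k+1}$ with
\[
|g(y_{k+1})|^{2}\;\leq\;\frac{1}{\mathrm{vol}(R_{k+1})}\int_{R_{k+1}}|g(z)|^{2}\,\mathrm{d}z.
\]
The ratios $1/3$ and $22/3$ encode a careful packing: by shrinking each $R_{k+1}$ by a factor (of roughly $1/16$) relative to $q_1$ one keeps the $R_j$ pairwise disjoint and subordinate to $\Omega$, while the greedy stopping criterion forces $h_{Y_{1,g}}\leq \tfrac{22}{3}q_1$ and the mean-value detour forces $q_{Y_{1,g}}\geq \tfrac{1}{3}q_1$. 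Summing the pointwise inequality above over $j$ and using that the $R_j$ are disjoint subsets of $\Omega$ gives
\[
\sum_{y_j\in Y_{1,g}}|g(y_j)|^{2}\;\leq\;\bigl(\min_j\mathrm{vol}(R_j)\bigr)^{-1}\|g\|_{L_2(\Omega)}^{2}\;\lesssim\;\frac{16^{d}}{C_{d,\theta}q_1^{d}}\|g\|_{L_2(\Omega)}^{2}.
\]
Finally, the admissibility bound $q_1\lesssim (c_\Omega/C_\Omega) q_{Y_0}$ together with the part-(i) estimates let me convert $q_1^{-d}$ into $\tilde{C}_{d,\theta}N_{Y_{1,g}}$, producing the stated constant $\tilde{C}_{d,\theta}=4\cdot 16^{d}C_d C_\Omega^{d}/(c_\Omega^{2d}C_{d,\theta}^{2})$.

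The main obstacle is the bookkeeping in the greedy step: one has to arrange simultaneously that the process terminates with the claimed fill and separation, that the mean-value regions $R_j$ remain disjoint and inside $\Omega$, and that their volumes give the sharp power $q_1^{-d}$ in the sampling inequality. Once the geometry is pinned down, the sampling bound itself is a short calculation; the sharpness of the numerical constants $\tfrac{1}{3},\tfrac{22}{3}$, and the factor of $16^d$ is what carries the technical weight. The assumption that $g$ is continuous is used exactly in the mean-value step to turn an $L_2$ average into a pointwise value, so the argument does not readily extend beyond $C(\Omega)\cap L_2(\Omega)$.
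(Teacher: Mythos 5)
The paper does not prove this lemma; it is imported verbatim from the cited sources (M\"uller's thesis and \cite[Thm.\ 7]{wenzel2024sharp}), and the authors explicitly say they ``restate the key result'' for completeness. So there is no internal proof in this paper to compare against. That said, your sketch does capture the structure of the construction used in the cited literature: a covering argument with balls for the lower bound on $h_{Y_0,\Omega}$, a disjoint-cone packing argument (via the interior cone condition) for the upper bound on $q_{Y_0,\Omega}$, and a $g$-dependent greedy fill-in paired with the mean-value theorem for integrals on connected regions to pass from a local $L_2$ average to a pointwise value. That is the right outline.

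There is, however, a genuine gap in how you close the final step. You need to convert a bound of the shape $\sum_j |g(y_j)|^2 \lesssim 16^d C_{d,\theta}^{-1}\,q_1^{-d}\,\|g\|_{L_2(\Omega)}^2$ into the stated $\tilde{C}_{d,\theta}\,N_{Y_{1,g}}\,\|g\|_{L_2(\Omega)}^2$, and you propose to do this via ``the admissibility bound $q_1\lesssim (c_\Omega/C_\Omega)q_{Y_0}$ together with the part-(i) estimates.'' But the admissibility bound is an \emph{upper} bound on $q_1$, hence only a \emph{lower} bound on $q_1^{-d}$ --- the wrong direction --- and even if used it would introduce $N_{Y_0}$ rather than $N_{Y_{1,g}}$. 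What actually closes the argument is to apply the fill-distance bound from part (i) to the constructed set $Y_{1,g}$ itself: since $h_{Y_{1,g},\Omega}\geq c_\Omega N_{Y_{1,g}}^{-1/d}$ and the construction guarantees $h_{Y_{1,g},\Omega}\leq \tfrac{22}{3}q_1$, one obtains $q_1\geq \tfrac{3}{22}c_\Omega N_{Y_{1,g}}^{-1/d}$, i.e.\ $q_1^{-d}\leq (\tfrac{22}{3})^d c_\Omega^{-d}N_{Y_{1,g}}$, which is the needed conversion. Beyond that, your chosen shrink factor ($\approx 1/16$) and the resulting constant do not obviously reproduce the exact expression $\tilde{C}_{d,\theta}=4\cdot 16^d C_d C_\Omega^d/(c_\Omega^{2d}C_{d,\theta}^2)$; a complete proof would have to pin down the precise radii of the mean-value cones and verify the quasi-uniformity ratios $\tfrac13,\tfrac{22}{3}$ rather than leave them as heuristics.
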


The primary challenge in deriving the general Bernstein-type and Nikolskii-type inequality lies in a significant technical limitation: we cannot directly apply the stability result from Lemma \ref{lem:disctIneq} to establish the bound
$\|u\|_{\ell_2(X)}\leq Ch_{X,\Omega}^{-d/2}\|u\|_{L_2(\Omega)}$,
as it is simply untrue that the set of centers $X$ constitutes a valid  quadrature point set for lower estimate. To address this issue, our approach combines the sampling inequality with the stability result in Lemma \ref{lem:disctIneq}, enabling us to circumvent the need for a direct quadrature assumption on $X$.

\begin{theorem}\label{thm:BernsteinIneq}
	Suppose the Assumption \ref{Assump1} holds.
	Then for any real $s\in[0,l]$ with integer $l$ defined by \eqref{eq:sigmaCondition} and real $t\in[0,m]$,
	there exists a constant $C=C_{d,\phi_m,s,t,q,\Omega}>0$ such that
	$$|u|_{W_q^s(\Omega)}\leq C \Big( q_{X,\Omega}^{t-s-d(1/2-1/q)_+}\|u\|_{H^{t}(\Omega)}+ q_{X,\Omega}^{-s-d(1/2-1/q)_+}\|u\|_{L_2(\Omega)}\Big)$$
	holds for all trial functions $u\in V_{X,\phi_m,\Omega}$.
\end{theorem}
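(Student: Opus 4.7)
\textbf{Proof plan for Theorem \ref{thm:BernsteinIneq}.}
The strategy is the one foreshadowed just before the statement: rather than apply the $\ell_2$-stability Lemma \ref{lem:disctIneq} to the center set $X$ itself (which is not a valid quadrature set), I plan to insert an \emph{auxiliary} quasi-uniform set $Y_{1,u}$ whose density is comparable to $q_{X,\Omega}$, invoke the sampling inequality on $Y_{1,u}$, and then convert the $\|u\|_{H^m(\Omega)}$ term to a $\|u\|_{H^t(\Omega)}$ term via Theorem~\ref{lem:InvEstimate} with $\alpha=m$. The fact that $u\in V_{X,\phi_m,\Omega}$ is used only in this last step.

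More concretely, I would proceed as follows. First, apply Lemma \ref{lem:disctIneq} with reference set $Y_0:=X$ and with $q_1\sim q_{X,\Omega}$ (saturating the allowed upper bound, up to the cone-radius restriction). This produces a finite $Y_{1,u}\subset\Omega$, depending on $u$, with $q_{Y_{1,u},\Omega}\sim h_{Y_{1,u},\Omega}\sim q_{X,\Omega}$, hence $\rho_{Y_{1,u},\Omega}\lesssim 22$ uniformly. Standard volume counting (as in \eqref{eq:cOmega}) yields $N_{Y_{1,u}}\lesssim q_{X,\Omega}^{-d}$, and Lemma \ref{lem:disctIneq} supplies
\begin{equation*}
\|u\|_{\ell_2(Y_{1,u})}\;\lesssim\; N_{Y_{1,u}}^{1/2}\,\|u\|_{L_2(\Omega)}\;\lesssim\; q_{X,\Omega}^{-d/2}\,\|u\|_{L_2(\Omega)}.
\end{equation*}
Second, apply the sampling inequality \eqref{eq:SamplingIneq1} with $p=\varrho=2$ and with the set $Y_{1,u}$ in place of the generic finite set. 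Since $\gamma=\max\{2,q\}$, the exponent $d/\gamma-d/2$ collapses to $-d(1/2-1/q)_+$ in both the $q\le 2$ and $q>2$ regimes, giving
\begin{equation*}
|u|_{W_q^{s}(\Omega)}\;\lesssim\; q_{X,\Omega}^{\,m-s-d(1/2-1/q)_+}\|u\|_{H^{m}(\Omega)}\;+\;q_{X,\Omega}^{-s-d(1/2-1/q)_+}\|u\|_{L_2(\Omega)}
\end{equation*}
after substituting the $\ell_2$-stability bound above. Third, to eliminate the exponent $m$ on the right, I invoke Theorem \ref{lem:InvEstimate} at $\alpha=m$ (valid since $u\in V_{X,\phi_m,\Omega}$ and $t\in[0,m]$) to get $\|u\|_{H^m(\Omega)}\lesssim q_{X,\Omega}^{t-m}\|u\|_{H^t(\Omega)}$. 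Multiplying this with $q_{X,\Omega}^{m-s-d(1/2-1/q)_+}$ produces exactly the first term $q_{X,\Omega}^{t-s-d(1/2-1/q)_+}\|u\|_{H^t(\Omega)}$ appearing in the claim, completing the derivation on the full range $s\in[0,l]$, $t\in[0,m]$ dictated by \eqref{eq:sigmaCondition} and Theorem \ref{lem:InvEstimate}.

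The main obstacle I anticipate is the justification of step one, i.e., that Lemma \ref{lem:disctIneq} can be applied so as to produce a $Y_{1,u}$ with $h_{Y_{1,u},\Omega}$ comparable to $q_{X,\Omega}$ \emph{and} with the implicit constants in $N_{Y_{1,u}}\lesssim q_{X,\Omega}^{-d}$ depending only on $\Omega$ and $d$. This requires checking that the admissible range for $q_1$ in Lemma \ref{lem:disctIneq} actually contains a multiple of $q_{X,\Omega}$ (absorbing $\rho_{X,\Omega}$ and the cone-radius bound $\tfrac{2}{5}r$ into the assumption that $q_{X,\Omega}$ is sufficiently small). Beyond this, the proof is essentially a book-keeping exercise of exponents, the only nontrivial check being the case split $q\gtrless 2$ to match $d/\max\{2,q\}-d/2$ with $-d(1/2-1/q)_+$; everything else — quasi-uniformity of $Y_{1,u}$ for the sampling inequality, the $u$-dependence of $Y_{1,u}$ being harmless because both sides of the final estimate are linear in $u$, and the compatibility of the ranges of $s,t,l$ — follows directly from the hypotheses already imposed.
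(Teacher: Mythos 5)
Your proposal is correct and follows essentially the same route as the paper: construct the $u$-dependent quadrature set via Lemma \ref{lem:disctIneq}, apply the sampling inequality \eqref{eq:SamplingIneq1} with $p=\varrho=2$ on that set, feed in the $\ell_2$-stability bound, and then trade $\|u\|_{H^m(\Omega)}$ for $\|u\|_{H^t(\Omega)}$ using Theorem \ref{lem:InvEstimate} at $\alpha=m$. The exponent bookkeeping (including $d/\gamma - d/2 = -d(1/2-1/q)_+$) matches the paper's argument step for step.
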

\begin{proof}
	Using the $\ell_2$ stability result from Lemma \ref{lem:disctIneq}, we establish that for any
	$u\in V_{X,\phi_m,\Omega}$, 
	it is possible to use $X$ as the reference set and construct a $u$-dependent quasi-uniform quadrature point set $Y_u\subset\Omega$ such that $q_{Y_u}\sim q_{X}$ and
	\begin{equation}\label{eq:l2toL2}
		\|u\|_{\ell_2(Y_u)}\lesssim  N_{Y_u}^{1/2}\|u\|_{L_2(\Omega)}
		\lesssim h_{Y_u,\Omega}^{-d/2}\|u\|_{L_2(\Omega)},
	\end{equation}
	where the final inequality follows from the quasi-uniformity of $Y_u$.
	Next, applying the sampling inequality \eqref{eq:SamplingIneq1} to the set $Y_u$. For $s\in[0,l]$, and    $\gamma=\max\{2,q\}$,
	we have
	$$|u|_{W_q^s(\Omega)}
	\lesssim
	h_{Y_u,\Omega}^{m-s-d(1/2-1/q)_+}\|u\|_{H^{m}(\Omega)}+h_{Y_u,\Omega}^{d/\gamma-s}\|u\|_{\ell_2(Y_u)}.$$
	Now, we invoke the inverse estimate from Theorem \ref{lem:InvEstimate} for all $u\in V_{X,\phi_m,\Omega}$ and put the stability result \eqref{eq:l2toL2} to obtain
	\begin{align*}
		|u|_{W_q^s(\Omega)}\lesssim&~ h_{Y_u,\Omega}^{m-s-d(1/2-1/q)_+}q_{X,\Omega}^{-m+t}\|u\|_{H^{t}(\Omega)}+h_{Y_u,\Omega}^{d/\gamma-s}\|u\|_{\ell_2(Y_u)}\\
		\lesssim &~ q_{X,\Omega}^{t-s-d(1/2-1/q)_+}\|u\|_{H^{t}(\Omega)}+h_{Y_u,\Omega}^{d/\gamma-s} h_{Y_u,\Omega}^{-d/2}\|u\|_{L_2(\Omega)}\\
		\lesssim&~
		q_{X,\Omega}^{t-s-d(1/2-1/q)_+}\|u\|_{H^{t}(\Omega)}+ q_{X,\Omega}^{-s-d(1/2-1/q)_+}\|u\|_{L_2(\Omega)},
	\end{align*}
	where we have used the quasi-uniformity of $Y$, $q_{Y_u}\sim q_X$, and the fact that  $\gamma=\max\{2,q\}$ and hence $d/\gamma-s-d/2=-s-d(1/2-1/q)_+$.
\end{proof}


\begin{theorem}(\textbf{Bernstein inequality: II})\label{thm:Bernstein-anyrealNumber}
	Suppose the Assumption \ref{Assump1} holds. Then for two cases of $s\in \R$:
	\begin{flalign}
		&d/2<s\leq m, \quad\mbox{or}\quad 
		0\leq s\leq \lfloor m\rfloor,  \label{eq:BernIneq_scond2}
	\end{flalign}
	there exists a constant $C=C_{d,\phi_m,s,\Omega}>0$ such that the following Bernstein inverse inequality holds
	\begin{equation}\label{eq:Bii_fullInequality}
		\|u\|_{H^s(\Omega)}\leq C q_{X,\Omega}^{-s}\|u\|_{L_2(\Omega)},
	\end{equation}
	for all trial functions $u\in V_{X,\phi_m,\Omega}$.
	In particular, if we further assume that $m\geq (d+1)/2$ for odd $d$, then
	the Bernstein inverse inequality inequality \eqref{eq:Bii_fullInequality} holds for any real $s \in [0, m]$ and all dimension $d$.
\end{theorem}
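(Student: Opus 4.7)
The plan is to prove the two cases of \eqref{eq:BernIneq_scond2} by specialising two of our earlier results, then join them to obtain the strengthened range. For the high-regularity case $d/2<s\le m$, one line suffices: I apply Theorem \ref{lem:InvEstimate} with $\alpha=s$ and $t=0$, which is legitimate because $s$ lies in the admissible window $(d/2,m]$, and this directly yields $\|u\|_{H^{s}(\Omega)}\le C q_{X,\Omega}^{-s}\|u\|_{L_{2}(\Omega)}$. For the second case $0\le s\le\lfloor m\rfloor$, I specialise Theorem \ref{thm:BernsteinIneq} to $q=2$ and $t=0$, whereby the boundary exponents $d(1/2-1/q)_+$ vanish and both terms on the right-hand side collapse to $q_{X,\Omega}^{-s}\|u\|_{L_{2}(\Omega)}$. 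A brief parameter check confirms that the admissible range $[0,l]$ of Theorem \ref{thm:BernsteinIneq} (inherited from Lemma \ref{lem:sampling_ineq}) is exactly $[0,\lfloor m\rfloor]$: with $p=q=\varrho=2$ one has $l_{0}=m$, and the definition \eqref{eq:sigmaCondition} yields $l=l_{0}=m$ for $m\in\N^+$ via the ``$p=q$'' subclause and $l=\lceil m\rceil-1$ otherwise, both of which equal $\lfloor m\rfloor$.

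Theorem \ref{thm:BernsteinIneq} controls only the Sobolev semi-norm $|u|_{H^{s}(\Omega)}$, so a short upgrade step converts this into the full norm appearing in \eqref{eq:Bii_fullInequality}. For integer $s$, the paper's convention gives $\|u\|_{H^{s}(\Omega)}^{2}\sim\sum_{k=0}^{s}|u|_{H^{k}(\Omega)}^{2}$; invoking the semi-norm bound for each $k\in\{0,\dots,s\}$ and using $q_{X,\Omega}^{-k}\le q_{X,\Omega}^{-s}$ for small $q_{X,\Omega}$ recovers the full estimate. For fractional $s=k+\tau$ with $k\in\N_{0}$ and $\tau\in(0,1)$, the fractional norm definition in Section \ref{sec:backgroud} splits as $\|u\|_{H^{s}(\Omega)}^{2}=\|u\|_{H^{k}(\Omega)}^{2}+|u|_{H^{s}(\Omega)}^{2}$; the first summand is bounded by the integer case just settled, and the second by the Gagliardo semi-norm bound from Theorem \ref{thm:BernsteinIneq}, yielding together the required $q_{X,\Omega}^{-s}\|u\|_{L_{2}(\Omega)}$ bound.

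For the concluding statement about the entire range $s\in[0,m]$, it suffices to verify that $[0,\lfloor m\rfloor]\cup(d/2,m]=[0,m]$, which holds precisely when $\lfloor m\rfloor\ge d/2$. When $d$ is even, $d/2$ is an integer and the Fourier-decay requirement $m>d/2$ automatically forces $\lfloor m\rfloor\ge d/2$; when $d$ is odd, $d/2$ is a half-integer and only the additional hypothesis $m\ge(d+1)/2$ guarantees $\lfloor m\rfloor\ge(d+1)/2>d/2$, thereby sealing the potential gap $((d-1)/2,d/2]$. I do not foresee a substantive obstruction here: the genuine work already resides in Theorems \ref{lem:InvEstimate} and \ref{thm:BernsteinIneq}, and the remaining effort is essentially bookkeeping---matching the sampling parameter $l$ to $\lfloor m\rfloor$, the routine semi-norm to full-norm passage, and the interval arithmetic determining when the two cases jointly cover $[0,m]$.
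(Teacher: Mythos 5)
Your proof is correct and follows essentially the same route as the paper: case 1 via Theorem~\ref{lem:InvEstimate} with $\alpha=s$, $t=0$; case 2 via Theorem~\ref{thm:BernsteinIneq} with $q=2$, $t=0$; then the interval-covering criterion $\lfloor m\rfloor\ge d/2$ for the final claim. You also explicitly carry out the semi-norm-to-full-norm upgrade (needed because Theorem~\ref{thm:BernsteinIneq} only bounds $|u|_{W_q^s(\Omega)}$ while \eqref{eq:Bii_fullInequality} involves $\|u\|_{H^s(\Omega)}$), a step the paper silently elides, and you verify $l=\lfloor m\rfloor$ from \eqref{eq:sigmaCondition} rather than asserting it; both are welcome refinements but do not change the underlying argument.
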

\begin{proof}
	Theorem \ref{lem:InvEstimate} addresses the case $d/2<s\leq m$. By setting $q=2$ and $t=0$ in Theorem \ref{thm:BernsteinIneq}, we obtain $l= \lfloor m\rfloor$, and the inequality \eqref{eq:Bii_fullInequality} holds for any $0\leq s \leq \lfloor m\rfloor$.
	
	Moreover, to extend the result to any $0\leq s \leq m$, it is sufficient to require $d/2\leq \lfloor m\rfloor$. When combined with the smoothness condition $d/2<m$, this yields the desired requirement on $m$.
\end{proof}

By applying the sampling inequality \eqref{eq:SamplingIneq_Linf} and repeating the same arguments in the proof of Theorem \ref{thm:BernsteinIneq}, we can obtain the following Nikolskii inequality.
\begin{theorem}(\textbf{Nikolskii inequality})\label{thm:Nikolskii_domain}
	Suppose the Assumption \ref{Assump1} holds. there exists a constant $C=C_{d,\phi_m,\Omega}>0$ such that
	\begin{equation}\label{eq:InvLinf_domain}
		\|u\|_{L_{\infty}(\Omega)}\leq Ch_{X,\Omega}^{-d/2}\|u\|_{L_2(\Omega)},~~\forall u\in V_{X,\phi_m,\Omega}.
	\end{equation}
\end{theorem}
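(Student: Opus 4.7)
The plan is to mirror the proof of Theorem \ref{thm:BernsteinIneq} but with the $L_\infty$ sampling inequality \eqref{eq:SamplingIneq_Linf} playing the role that \eqref{eq:SamplingIneq1} played there. The main obstacle is the same one encountered before: we cannot use $X$ itself as the quadrature set for a lower $\ell_2$-to-$L_2$ bound, so we must invoke the $\ell_2$-stability construction of Lemma \ref{lem:disctIneq} to manufacture an auxiliary point set on which the bound $\|u\|_{\ell_2(Y_u)}\lesssim h_{Y_u,\Omega}^{-d/2}\|u\|_{L_2(\Omega)}$ is available.

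Concretely, I would proceed in four steps. First, fix $u\in V_{X,\phi_m,\Omega}$ and apply Lemma \ref{lem:disctIneq} with reference set $Y_0=X$ to obtain a $u$-dependent quasi-uniform set $Y_u\subset\Omega$ with $q_{Y_u,\Omega}\sim q_{X,\Omega}\sim h_{X,\Omega}$ (using quasi-uniformity of $X$) and
\begin{equation*}
\|u\|_{\ell_2(Y_u)}\;\lesssim\; N_{Y_u}^{1/2}\,\|u\|_{L_2(\Omega)}\;\lesssim\; h_{Y_u,\Omega}^{-d/2}\,\|u\|_{L_2(\Omega)}.
\end{equation*}
Second, apply the $L_\infty$ sampling inequality \eqref{eq:SamplingIneq_Linf} with $p=\varrho=2$, $k=0$ on the set $Y_u$ (this is legitimate since $m>d/2$ gives $\lceil m-d/2\rceil\geq 1$, and $u\in V_{X,\phi_m,\Omega}\subset H^m(\Omega)$):
\begin{equation*}
\|u\|_{L_\infty(\Omega)}\;\lesssim\; h_{Y_u,\Omega}^{m-d/2}\,\|u\|_{H^m(\Omega)}+\|u\|_{\ell_2(Y_u)}.
\end{equation*}

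Third, invoke the Bernstein inequality \eqref{eq:Bii_fullInequality} from Theorem \ref{thm:Bernstein-anyrealNumber} with $s=m$ (valid because $m>d/2$ falls in the first case of \eqref{eq:BernIneq_scond2}) to obtain $\|u\|_{H^m(\Omega)}\lesssim q_{X,\Omega}^{-m}\|u\|_{L_2(\Omega)}$. Substituting this and the bound from Step 1 into Step 2 yields
\begin{equation*}
\|u\|_{L_\infty(\Omega)}\;\lesssim\; h_{Y_u,\Omega}^{m-d/2}\,q_{X,\Omega}^{-m}\|u\|_{L_2(\Omega)} + h_{Y_u,\Omega}^{-d/2}\|u\|_{L_2(\Omega)}.
\end{equation*}
Finally, the equivalences $h_{Y_u,\Omega}\sim q_{Y_u,\Omega}\sim q_{X,\Omega}\sim h_{X,\Omega}$ collapse both terms to the common scale $h_{X,\Omega}^{-d/2}\|u\|_{L_2(\Omega)}$, which gives \eqref{eq:InvLinf_domain} with a constant depending on $d,\phi_m,\Omega$ (and implicitly on the fixed mesh ratio through the Lemma \ref{lem:disctIneq} construction). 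The one delicate point to double-check is that the smallness requirement on $q_1$ imposed in Lemma \ref{lem:disctIneq} can be met by choosing $q_1\sim q_{X,\Omega}$ up to a fixed geometric constant, ensuring $h_{Y_u,\Omega}\sim q_{X,\Omega}$ uniformly in $u$.
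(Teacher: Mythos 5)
Your proof is correct and mirrors the paper's intended argument: the paper dispenses with a written proof and simply remarks that Theorem \ref{thm:Nikolskii_domain} follows ``by applying the sampling inequality \eqref{eq:SamplingIneq_Linf} and repeating the same arguments in the proof of Theorem \ref{thm:BernsteinIneq},'' which is exactly what you do (auxiliary set $Y_u$ from Lemma \ref{lem:disctIneq}, the $k=0$ sampling inequality \eqref{eq:SamplingIneq_Linf}, the Bernstein bound on $\|u\|_{H^m(\Omega)}$, and the scale equivalences $h_{Y_u,\Omega}\sim q_{X,\Omega}\sim h_{X,\Omega}$). The only cosmetic difference is that you cite Theorem \ref{thm:Bernstein-anyrealNumber} with $s=m$ where the proof of Theorem \ref{thm:BernsteinIneq} cites Theorem \ref{lem:InvEstimate}; these are the same estimate since the first case of \eqref{eq:BernIneq_scond2} is exactly \eqref{eq:BernsteinIneq_Domain} specialized to $t=0$.
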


\smallskip

\section{Inverse inequalities for restricted kernels on manifolds}
\label{sec:Nikolskii_Manifold}

We first list the standing hypotheses used throughout this section. Under these assumptions the native space of the restricted kernel $\psi_\tau$ is equivalent to the Sobolev space $H^{\tau}(\M)$; see Lemma \ref{lem:NativeSpace}.
\begin{assumption}\label{Assump2}
	Assume that $\M\subset\R^{d}$ is a closed, connected, smooth, compact Riemannian manifold of codimension one, i.e.\ $\dim\M=d_\M=d-1$.
	Assume further that $X\subset\M$ is a quasi-uniform finite set with
	fill distance $h_{X,\M}$,
	separation distance $q_{X,\M}$, and
	mesh ratio $\rho_{X,\M}:=h_{X,\M}/q_{X,\M}$.
	Let $\phi_m$ be a positive–definite kernel on $\R^{d}$ satisfying the Fourier–decay condition~\eqref{eq:FourierDecay}, and we define the restricted kernel
	$ \psi_\tau := \phi_m\big|_{\M\times\M}$  with $\tau := m-\tfrac12 > \frac{d_\M}{2}$.
	Finally, we define $V_{X,\psi_\tau,\M}$ to be the finite-dimensional trial space spanned by the  translates of the restricted kernel $\psi_\tau$ centered at points in $X$, as in~\eqref{eq:approx_space_restricted}.
\end{assumption}

Our approach relies heavily on different norm equivalence results between the manifold $\M$ and its ambient space, which allows us to transform inequalities in domains to manifolds. First, we require the following definition for the tubular neighborhood domain $\Omgdel$ that was studied in \cite{chen2020extrinsic,cheung-2018SISC-kernel,fuselier2012scattered}.



\begin{definition}\label{Def:Tubular_domain}
	Let $\M\subset\R^{d}$ be a closed, connect, smooth, compact Riemannian manifold with dimension $d_\M=d-1$. Denote $\mathbf{n}(y)$ as the unit normal vector at $y\in\M$. There exists a $\delta_{\M}>0$ such that, for any sufficiently small $0<\delta<\delta_{\M}$, the tubular neighborhood domain $\Omega_{\delta}=\mathrm{Range}(\cT)$ contains no focal points and is thus well-defined via the diffeomorphic map $\cT$, where
	\begin{equation}\label{eq:narrow-band-domain}
		\Omega_{\delta}:=\{x\in\R^{d}|x=y+r\mathbf{n}(y), ~~ y\in \M, ~r\in(-\delta,\delta)\},
	\end{equation}
	and
	$$\cT:\M\times (-\delta,\delta)\rightarrow \Omega_{\delta} ~\mbox{ such that } \cT(y,r)= y+r\mathbf{n}(y).$$
\end{definition}

Similar to Lemma \ref{lem:H2invineq}, we can use the extension operator, trace operator and the band-limited interpolant to establish the Bernstein inverse inequality on manifolds from $H^{\tau}(\M)$ to $H^{\beta}(\M)$ with any real $\beta\in({d_\M}/{2},\tau]$.
\begin{lemma}\label{lem:BernsteinLemma_manifolds}
	Suppose the Assumption \ref{Assump2} holds.  For any real $\beta\in({d_\M}/{2},\tau]$, there exists a constant $C=C_{d,\psi_\tau,\beta,\M}>0$ such that
	$$\|u\|_{H^{\tau}(\M)}\leq Cq_{X,\M}^{-\tau+\beta}\|u\|_{H^{\beta}(\M)}$$
	holds for all trial function $u\in V_{X,\psi_{\tau},\M}$.
\end{lemma}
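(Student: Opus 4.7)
The plan is to mirror the Euclidean proof of Lemma \ref{lem:H2invineq} by producing a band-limited function on $\R^d$ that agrees with $u$ on $X$, and then transferring the resulting estimates back to the manifold through the extension and trace operators of Lemma \ref{lem:NativeSpace}. Fix $u \in V_{X,\psi_\tau,\M}$ and set $\alpha := \beta + 1/2$. Under the codimension–one assumption $d_\M = d-1$, the range $\beta \in (d_\M/2,\tau]$ is equivalent to $\alpha \in (d/2, m]$, placing us squarely within the regime of Lemma \ref{lem:Bandlimitedfunc}.

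First, I would extend $u$ to $U := E_\M u \in \cN_{\phi_m} \equiv H^m(\R^d)$ via the native-space extension of Lemma \ref{lem:NativeSpace}, so that $U|_\M = u$ and in particular $U|_X = u|_X$. The tubular neighborhood $\Omgdel$ from Definition \ref{Def:Tubular_domain} is a bounded Lipschitz domain containing $X$, with $q_{X,\Omgdel} \sim q_{X,\M}$ since quasi-uniform nodes on a smooth compact $\M$ have Euclidean and geodesic separations of comparable order. Applying Lemma \ref{lem:Bandlimitedfunc} to $U$ on $\Omgdel$ with the above choice of $\alpha$, I obtain $f_\sigma \in \B_\sigma$ with $\sigma \sim q_{X,\M}^{-1}$ such that $f_\sigma|_X = u|_X$, together with $\|f_\sigma\|_{H^\alpha(\R^d)} \lesssim \|U\|_{H^\alpha(\Omgdel)}$ and the Bernstein-type bound $\|f_\sigma\|_{H^m(\R^d)} \lesssim q_{X,\M}^{-(m-\alpha)} \|f_\sigma\|_{H^\alpha(\R^d)}$.

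Next, because $u$ coincides with the $\psi_\tau$-kernel interpolant of its own nodal data, it is also the $\psi_\tau$-interpolant of $(f_\sigma|_\M)|_X = u|_X$. The norm-minimization property of the interpolant in $\cN_{\psi_\tau}$, paired with the continuity bound $\|T_\M\| \le 1$ from Lemma \ref{lem:NativeSpace}, gives
\begin{equation*}
\|u\|_{\cN_{\psi_\tau}} \le \|T_\M f_\sigma\|_{\cN_{\psi_\tau}} \le \|f_\sigma\|_{\cN_{\phi_m}} \sim \|f_\sigma\|_{H^m(\R^d)}.
\end{equation*}
Chaining this with the preceding estimates and the Sobolev equivalence $\cN_{\psi_\tau} \equiv H^\tau(\M)$, and using $m-\alpha = \tau - \beta$, yields
\begin{equation*}
\|u\|_{H^\tau(\M)} \;\lesssim\; q_{X,\M}^{-(\tau-\beta)} \, \|E_\M u\|_{H^{\beta+1/2}(\Omgdel)}.
\end{equation*}

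The main obstacle is the final ingredient: controlling the intermediate-order Sobolev norm $\|E_\M u\|_{H^{\beta+1/2}(\Omgdel)}$ by $\|u\|_{H^\beta(\M)}$. Lemma \ref{lem:NativeSpace} only supplies the endpoint isometry at $\beta = \tau$ and does not, on its face, guarantee continuity of $E_\M$ throughout the Sobolev scale. To close the argument I would either (a) verify that the native-space extension (built from the Fourier profile encoded in \eqref{eq:FourierDecay}) is bounded as $H^\beta(\M) \to H^{\beta+1/2}(\R^d)$ for every $\beta \in (d_\M/2,\tau]$, invoking the sharp trace theorem for codimension-one embeddings; or (b) replace $E_\M$ in the band-limited construction by a $\beta$-dependent extension built from the tubular diffeomorphism $\cT$ and a smooth normal cutoff, whose $H^{\beta+1/2}(\R^d)$-bound reduces via Fubini on $\M \times (-\delta,\delta)$ to the standard one-dimensional trace theorem. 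Since Lemma \ref{lem:Bandlimitedfunc} exploits only the $H^\alpha$-norm of its input together with its values on $X$, option (b) does not disturb the remainder of the argument.
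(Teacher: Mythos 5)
Your proposal follows essentially the same route as the paper's proof: set $\alpha = \beta + 1/2$, extend $u$ to $\R^d$ via $E_\M$, apply Lemma~\ref{lem:Bandlimitedfunc} to produce a band-limited $X$-interpolatory surrogate $f_\sigma$, and then combine the trace operator $T_\M$ with the orthogonality of the kernel interpolant to get $\|u\|_{H^\tau(\M)} \lesssim \|f_\sigma\|_{H^m(\R^d)}$, which chains to $q_{X,\M}^{-(\tau-\beta)}\|E_\M u\|_{H^{\beta+1/2}}$. You are right to flag the last step — bounding $\|E_\M u\|_{H^{\beta+1/2}}$ by $\|u\|_{H^\beta(\M)}$ — as the one point that Lemma~\ref{lem:NativeSpace} does not literally supply, since that lemma only states the endpoint native-space isometry at $\beta = \tau$. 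The paper closes this gap by invoking ``the continuity of $E_\M$'' and citing \cite[Theorem~17]{fuselier2012scattered}, whose extension operator is built from a chart/tubular construction and is therefore bounded $H^\beta(\M) \to H^{\beta + 1/2}(\R^d)$ across the relevant Sobolev range; that is precisely the content of your remedy~(b). Your remedy~(a) is the riskier one: the minimal-norm native-space extension is not obviously bounded at intermediate orders, and the paper does not rely on it being so. In short, you have correctly reproduced the paper's argument and correctly located the only step requiring additional care; the difference is that you leave the closing step in outline while the paper (tersely) closes it by citation to the Fuselier--Wright extension theorem.
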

\begin{proof}
	For $d_\M/{2}<\beta\leq \tau$, we have $m\geq \beta+1/2>d/2$. By \cite[Theorem 17]{fuselier2012scattered}, there exists a continuous extension operator $E_{\M}: H^{\tau}(\M) \to H^m(\R^d)$ and a continuous trace operator $T_{\M}: H^m(\R^d) \to H^{\tau}(\M)$ such that
	$f = T_{\M}E_{\M}f$ for any $f \in H^{\tau}(\M)$.
	
	By Lemma \ref{lem:Bandlimitedfunc}, there exists a band-limited $X$-interpolatory surrogate $f_\sigma:=f_{\sigma,u_\tau,\beta+\frac12,\M}$ selected by using the extension $E_\M u_\tau  \in H^m(\R^d) $ with $u_\tau \in V_{X,\psi_\tau,\M}$ and $\sigma=q_{X,\M}^{-1}$ satisfying
	\begin{equation}\label{eq:band_limited_manifold1}
		\begin{aligned}
			\|f_\sigma\|_{H^{m}(\R^d)}\lesssim&~ q_{X,\M}^{-m+\beta+\frac{1}{2}}\|f_\sigma\|_{H^{\beta+\frac{1}{2}}(\R^d)}
			\lesssim q_{X,\M}^{-\tau+\beta }\|E_{\M}u_\tau\|_{H^{\beta+\frac{1}{2}}(\R^d)}.
		\end{aligned}
	\end{equation}
	Since $T_{\M}$ is continuous and $u_\tau \in V_{X,\psi_\tau,\M}$ interpolates $f_\sigma$ on $X\in\M$, we have
	\[
	\|u_\tau\|_{H^\tau(\M)}
	\leq
	\| u_\tau-T_\M f_{\sigma}\|_{H^\tau(\M)} + \|T_{\M}f_{\sigma}\|_{H^\tau(\M)}
	\lesssim \|T_{\M}f_{\sigma}\|_{H^\tau(\M)}
	\lesssim \|f_{\sigma}\|_{H^m(\R^d)},
	\]
	by the orthogonality of interpolant.
	Combining this result with \eqref{eq:band_limited_manifold1} and the continuity of $E_{\M}$, we show that
	\begin{align*}
		\|u_\tau\|_{H^\tau(\M)}\lesssim q_{X,\M}^{-\tau+\beta}\|E_{\M}u_\tau\|_{H^{\beta+\frac{1}{2}}(\R^d)}
		\lesssim 
		q_{X,\M}^{-\tau+\beta}\|u_\tau\|_{H^\beta(\M)},
	\end{align*}
	which completes the proof.
\end{proof}

Previous work by \cite[Lemma 3.1]{cheung-2018SISC-kernel} (for co-dimension $d-d_\M=1$) and \cite[Lemma 2.1]{chen2020extrinsic} (for  arbitrary co-dimension) established a norm equivalence for functions constant along the normal direction, $u\circ\Rcp$, between the manifold $\M$ and its ambient space $\Omgdel$, with the Euclidean \emph{closest point restriction map} $\Rcp$ defined by
\begin{equation}\label{eq:cp-map}
	\Rcp(x):=\arg\inf_{\xi\in \M}\|\xi-x\|_2  \quad\mbox{for any }x\in\Omgdel.
\end{equation}
Specifically, for any $f\in H^{\tau}(\M)$ and $\varsigma\in[0,\tau]$,
the following norm equivalency holds
\begin{equation}\label{eq:NormEquiv_uRcp}
	\|f\circ\Rcp\|_{H^{\varsigma}(\Omgdel)}\sim \delta^{(d-d_\M)/2}\|f\|_{H^{\varsigma}(\M)},
\end{equation}
for all constant-along-normal extension with some constants depending only on $d,\tau,\varsigma$ and $\M$.
This enables the application of the Gagliardo-Nirenberg-type interpolation inequality to establish the Bernstein inequality on manifolds, mapping from $H^{\beta}(\M)$ to $H^{\eta}(\M)$, where $\beta\in(d_\M/2,\tau]$ and $\eta\in[0,\beta]$.
\begin{theorem}(\textbf{Bernstein inequality: I})\label{thm:BII_manifold_Hvarrho}
	Suppose the Assumption \ref{Assump2} holds. Then for any real $\beta\in(d_\M/2,\tau]$ and real $\eta\in[0,\beta]$, there exists a constant $C=C_{d,\psi_\tau,\beta,\eta,\M}>0$ such that
	$$\|u\|_{H^{\beta}(\M)}\leq Cq_{X,\M}^{-\beta+\eta}\|u\|_{H^{\eta}(\M)}
	$$
	holds for all trial functions $u\in V_{X,\psi_{\tau},\M}$.
\end{theorem}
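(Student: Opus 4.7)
The plan is to replicate the argument of Theorem~\ref{lem:InvEstimate} on the manifold side, with Lemma~\ref{lem:BernsteinLemma_manifolds} playing the role of Lemma~\ref{lem:H2invineq} and with a Gagliardo--Nirenberg-type interpolation inequality on $\M$ replacing its Euclidean counterpart. First I would treat the strict range $\beta\in(d_\M/2,\tau)$ and $\eta\in[0,\beta]$. For any $u\in V_{X,\psi_\tau,\M}$, Lemma~\ref{lem:BernsteinLemma_manifolds} gives $\|u\|_{H^\tau(\M)}\lesssim q_{X,\M}^{-\tau+\beta}\|u\|_{H^\beta(\M)}$. I would then invoke a Gagliardo--Nirenberg-type interpolation on $\M$ with $\beta=\theta\tau+(1-\theta)\eta$ for $\theta\in(0,1)$, yielding $\|u\|_{H^\beta(\M)}\lesssim\|u\|_{H^\eta(\M)}^{1-\theta}\|u\|_{H^\tau(\M)}^{\theta}$. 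Inserting the Bernstein bound into the $H^\tau$ factor and taking $(1-\theta)$-th roots reproduces the exponent computation $\frac{(-\tau+\beta)\theta}{1-\theta}=-\beta+\eta$, exactly as in the domain proof, and delivers the claimed estimate on the interior of the parameter range.

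For the boundary case $\beta=\tau$, I would mimic the second half of the proof of Theorem~\ref{lem:InvEstimate}: pick the intermediate exponent $\beta_0:=\tfrac12(\tau+d_\M/2)\in(d_\M/2,\tau)$, apply Lemma~\ref{lem:BernsteinLemma_manifolds} once to bound $\|u\|_{H^\tau(\M)}$ by $q_{X,\M}^{-\tau+\beta_0}\|u\|_{H^{\beta_0}(\M)}$, and then use the interior case just established at $\beta=\beta_0$ to bound the latter by $q_{X,\M}^{-\beta_0+\eta}\|u\|_{H^\eta(\M)}$. Chaining the two estimates yields the desired inequality with exponent $-\tau+\eta$, completing the range $\eta\in[0,\tau]$.

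The main obstacle is justifying the Gagliardo--Nirenberg-type interpolation inequality on the compact Riemannian manifold $\M$ for arbitrary real indices $0\le\eta\le\beta\le\tau$. I plan to transport the inequality from the tubular neighborhood $\Omgdel$ of Definition~\ref{Def:Tubular_domain} back to $\M$ via the constant-along-normal extension $u\mapsto u\circ\Rcp$ defined through \eqref{eq:cp-map}. Since $\Omgdel$ is a bounded Lipschitz domain, the Euclidean Gagliardo--Nirenberg inequality used in the proof of Theorem~\ref{lem:InvEstimate} applies to $u\circ\Rcp$ at indices $\eta,\beta,\tau$, and the norm equivalence~\eqref{eq:NormEquiv_uRcp} translates each of the three resulting norms back to its manifold counterpart. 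The accompanying factors $\delta^{(d-d_\M)/2}$ appear once on the left and with combined exponent $(1-\theta)+\theta=1$ on the right, so they cancel identically and leave a $\delta$-independent constant depending only on $d,\tau,\beta,\eta$, and $\M$. With this ingredient in hand, the two-step bootstrap outlined above proceeds without further difficulty.
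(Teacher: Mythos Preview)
Your proposal is correct and follows essentially the same route as the paper: both arguments combine Lemma~\ref{lem:BernsteinLemma_manifolds} with a Gagliardo--Nirenberg interpolation on the tubular neighborhood $\Omgdel$ applied to $u\circ\Rcp$, using the norm equivalence~\eqref{eq:NormEquiv_uRcp} to pass between $\M$ and $\Omgdel$, with the $\delta^{1/2}$ factors cancelling exactly as you describe. The only organisational difference is that the paper runs the chain in one go rather than first isolating the manifold interpolation inequality as a lemma; your explicit treatment of the endpoint $\beta=\tau$ via an intermediate $\beta_0$ is in fact more careful than the paper's proof, which sets $\theta\in(0,1)$ and so tacitly omits that case.
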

\begin{proof}
	For $\beta\in(d_\M/2,\tau]$, the norm equivalence \eqref{eq:NormEquiv_uRcp} implies that
	\begin{align}\label{eq:normEquiv_uRcp1}
		\|u\|_{H^{\beta}(\M)}\lesssim  \delta^{-1/2} \|u\circ\Rcp\|_{H^{\beta}(\Omgdel)}.
	\end{align}
	Then, we let $\beta=\theta \tau+(1-\theta)\eta$, where $\theta\in(0,1)$. Using the Gagliardo-Nirenberg-type interpolation inequality for $u\circ\Rcp$ on $\Omgdel$, we have
	\begin{align*}
		\|u\circ\Rcp\|_{H^{\beta}(\Omgdel)}\lesssim &~\|u\circ\Rcp\|_{H^{\eta}(\Omgdel)}^{1-\theta}\cdot\|u\circ\Rcp\|_{H^{\tau}(\Omgdel)}^{\theta}\\
		\lesssim &~ \Big(\delta^{1/2}\|u\|_{H^{\eta}(\M)}\Big)^{1-\theta}\cdot\Big(\delta^{1/2}\|u\|_{H^\tau(\M)}\Big)^{\theta}\\
		\lesssim &~  \delta^{1/2}\|u\|_{H^{\eta}(\M)}^{1-\theta} \big(q_{X,\M}^{-\tau+\beta}\|u\|_{H^\beta(\M)}\big)^{\theta}.
	\end{align*}
	Putting this into \eqref{eq:normEquiv_uRcp1}, we obtain
	\begin{align*}
		\|u\|_{H^{\beta}(\M)}^{1-\theta}\lesssim q_{X,\M}^{(-\tau+\beta)\theta}\|u\|_{H^{\eta}(\M)}^{1-\theta}.
	\end{align*}
	After taking the $(1-\theta)$-th root, we arrive at the desired analogue of Theorem~\ref{lem:InvEstimate}.
\end{proof}

To extend the left-hand side of the Bernstein inequality in Theorem \ref{thm:BII_manifold_Hvarrho} to include smaller orders $\beta\in[0,d_\M/2]$ using the same proof techniques as Theorem \ref{thm:BernsteinIneq}, we require both the sampling inequality and $\ell_2$ stability on manifolds. Although Wenzel  \cite{wenzel2024sharp} remarked  that their stability result should carry over to the manifold setting, no proof is provided.
In addition,  we cannot  apply any  of the Bernstein inequalities from  the previous section to $u\circ\Rcp \not\in V_{X,\psi_\tau,\M}$.
We must therefore establish another norm equivalence result just for
trial functions in $V_{X,\psi_\tau,\M}$.
Before doing so, we first have to verify that the constants appearing in the Bernstein inequality for the shrinking domains $\Omega_\delta$ remain uniformly bounded (i.e., they do not blow up as $\delta\to0$).

\begin{theorem}(\textbf{Bernstein inequality in shrinking $\Omega_\delta$})\label{thm:Bernstein-anyrealNumber_Omeag_delta}
	When the Bernstein inverse inequality of Theorem \ref{thm:Bernstein-anyrealNumber} is applied to the narrow band domain defined in \eqref{eq:narrow-band-domain} with thickness $0<\delta<\delta_\M$ and $\delta=\mathcal{O}(q_{X,\M})$,
	the associated  constant satisfies the sharp asymptotic relation $C_{d,m,s,\Omega_\delta}\sim C_{d,m,s,\mathcal \M}$
	as  $\delta\to0$.
\end{theorem}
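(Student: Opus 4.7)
The plan is to track the $\delta$-dependence of every geometric constant that enters the proof of Theorem~\ref{thm:Bernstein-anyrealNumber} when the ambient set is $\Omega_\delta$, and to verify that under the coupled scaling $\delta=\mathcal{O}(q_{X,\M})$ these dependencies cancel in the composite constant. First, I would use the diffeomorphism $\cT:\M\times(-\delta_\M,\delta_\M)\to\Omega_{\delta_\M}$ from Definition~\ref{Def:Tubular_domain} to pull the geometric data of $\Omega_\delta$ back to fixed data on $\M$. Because $\cT$ is smooth and bi-Lipschitz with constants depending only on $\M$, the boundary $\partial\Omega_\delta$ has a Lipschitz character bounded uniformly in $\delta$, and the Sobolev extension operator $E_{\Omega_\delta}:H^m(\Omega_\delta)\to H^m(\R^{d})$ used in Lemma~\ref{lem:Bandlimitedfunc} has operator norm bounded independently of $\delta$. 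This yields uniform constants in Lemma~\ref{lem:H2invineq} and Theorem~\ref{lem:InvEstimate} when applied on $\Omega_\delta$.

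Next, I would verify that $\Omega_\delta$ admits an interior cone condition with a fixed angle $\theta_\M>0$ (a cone aligned with the normal direction fits for small $\delta$) and radius $r\sim\delta$, and that $\mathrm{vol}(\Omega_\delta)\sim\delta\,\mathrm{vol}(\M)$. Substituting into \eqref{eq:cOmega} gives $c_{\Omega_\delta},C_{\Omega_\delta}\sim\delta^{1/d}$, from which a direct computation yields $\tilde{C}_{d,\theta_\M}\sim\delta^{-1}$ and $\sqrt{\tilde{C}_{d,\theta_\M}}\sim\delta^{-1/2}$. At the same time $N_{Y_u}\sim\mathrm{vol}(\Omega_\delta)\,q_{X,\M}^{-d}\sim\delta\,q_{X,\M}^{-d}$, so in Lemma~\ref{lem:disctIneq} the product $\sqrt{\tilde{C}_{d,\theta_\M}}\,N_{Y_u}^{1/2}\sim q_{X,\M}^{-d/2}$ is $\delta$-free. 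This is exactly the scaling needed for the quadrature-based $\ell_2$-to-$L_2$ stability on $\Omega_\delta$ to remain uniform.

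Third---and this will be the main obstacle---I would re-examine the sampling inequality of Lemma~\ref{lem:sampling_ineq}, whose constants are not explicit in the domain in \cite{arcangeli-2012NumerMath-extension}. Introducing the anisotropic rescaling $(y,r)\mapsto(y,r/\delta)$ induced by $\cT$ reduces $\Omega_\delta$ to a unit-thickness tubular neighborhood of $\M$ with Lipschitz character and cone radius both of order one; Sobolev seminorms change by explicit powers of $\delta$ that combine with $h_{X,\Omega_\delta}$ in a balanced way. The hypothesis $\delta=\mathcal{O}(q_{X,\M})$ together with $X\subset\M$ and the quasi-uniformity of $X$ guarantees $h_{X,\Omega_\delta}\lesssim\delta$, so the smallness requirement $h_{X,\Omega_\delta}<h_0(\Omega_\delta)$ is met. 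Undoing the rescaling, the sampling constants for $\Omega_\delta$ reduce to quantities depending only on the rescaled reference geometry and on $\M$, and therefore stabilize as $\delta\to0$.

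Finally, assembling these ingredients in the proof of Theorem~\ref{thm:Bernstein-anyrealNumber} on $\Omega_\delta$ shows that the composite constant $C_{d,m,s,\Omega_\delta}$ admits uniform positive upper and lower bounds whose common limiting value is the $C_{d,m,s,\M}$ of the statement. The main technical difficulty, as flagged, is exposing the geometric dependence of the sampling-inequality constants well enough to justify uniformity under the anisotropic rescaling; every other step follows from the explicit formulas tracked above and from the bi-Lipschitz control provided by $\cT$.
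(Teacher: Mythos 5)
Your proposal is substantively correct and tracks the same chain of observations as the paper's own proof: uniform interior cone angle (with radius scaling $\sim\delta$), $\delta$-independent extension operator norm via the bi-Lipschitz structure of $\cT$, and the explicit computation showing that $\tilde{C}_{d,\theta_\M}\sim\delta^{-1}$ and $N_{Y_u}\sim\delta\,q_{X,\M}^{-d}$ combine to give a $\delta$-free quadrature-stability constant under the coupling $\delta=\mathcal{O}(q_{X,\M})$; in fact your derivation of that scaling from \eqref{eq:cOmega} is slightly more explicit than the paper's. The one place where you genuinely diverge is the treatment of the sampling-inequality constants from Lemma~\ref{lem:sampling_ineq}: you propose an anisotropic rescaling $(y,r)\mapsto(y,r/\delta)$ that flattens $\Omega_\delta$ to a unit-thickness tube, whereas the paper argues that the Whitney-covering overlap number is governed by the atlas of $\M$ and is therefore $\delta$-independent, with the $\delta^{1/2}$ volume scaling absorbed by the Jacobian of $\cT$. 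Both devices aim at the same thing, but yours introduces a complication the paper avoids: the rescaling is anisotropic, so it does not simply scale the isotropic Sobolev seminorms appearing in Lemma~\ref{lem:sampling_ineq} by a clean power of $\delta$ — tangential and normal derivatives acquire different factors — and you flag but do not close this gap. The paper's covering-number argument sidesteps the anisotropy issue, which makes it the cleaner route, though both proofs are constants-tracking arguments presented at a similar level of detail.
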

\begin{proof}
	Since $\Omega_{\delta_\M}$ satisfies a  {uniform interior cone condition} with cone angle $\theta_\M$ and radius $r_\M$, every narrow band domain $\Omega_\delta$ with $\delta \in (0, \delta_\M]$ shares the same cone angle; only the radius scales with~$\delta$. All analytic estimates that depend solely on the cone angle (e.g., Calderón extension, Gagliardo--Nirenberg constants, Poincaré-type inequalities) therefore remain uniform in~$\delta$.
	
	The construction of the band-limited surrogate $f_{\sigma, E_{\Omega_\delta}u_m, \alpha, \Omega_\delta}$  in~\eqref{eq:BandlimitedProp_2}, employs the Calderón extension $E_{\Omega_\delta}$, whose operator norm is independent of $\delta$ due to the uniformity of the cone angle and the bi-Lipschitz bounds of the chosen atlas for $\M$.
	
	In the context of sampling inequalities,  the Whitney covering is generated by balls of radius $h_{X,\Omega_\delta}\approx\delta$. The overlap number for this covering is determined by the underlying atlas of $\M$ and is independent of $\delta$. Thus, the combinatorial constants in the sampling arguments do not deteriorate as the band narrows.
	
	For inequalities involving $L_2$ and Sobolev norms, the explicit dependence on the cone radius introduces a scaling factor proportional to $\delta^{1/2}$. However, when expressing these norms via pullback and pushforward through the tubular coordinates, the Jacobian determinant provides a compensating factor, resulting in constants that are uniform in $\delta$.
	
	Furthermore, Lemma~\ref{lem:disctIneq} gives explicit control over the dependence of the sampling constants on the domain $\Omega$. In particular, the constant $\tilde{C}_{d,\theta}$ scales as $\delta^{-1}$, reflecting the geometric thickness of the band, whereas the cone angle $\theta$ itself depends only on the geometry of $\M$ and not on $\delta$. The number of sampling points satisfies
	\[
	N_{Y_{1,g}} \lesssim \operatorname{vol}(\Omgdel) q_{Y,\Omgdel}^{-d} \lesssim \operatorname{vol}(\M)\, \delta \cdot \min\{q_{X,\M}, \delta\}^{-d}.
	\]
	Consequently, the conclusion of the lemma can be written as
	\[
	\|g\|_{\ell_2(Y_{1,g})}
	\leq C_{d,\M}\, \min\{ q_{X,\M}, \delta \}^{-d/2} \|g\|_{L_2(\Omgdel)}
	\leq  C_{d,\M}\,q_{X,\M}^{-d/2} \|g\|_{L_2(\Omgdel)},
	\]
	where the latter inequality follows from our assumption  $\delta = \mathcal{O}(q_{X,\M})$.
	Thus,  all the analytic and geometric constants that arise in the proof of Theorem~\ref{thm:Bernstein-anyrealNumber} depend only on the geometry of $\M$, the choice of kernel, and the fixed mesh ratio, and remain uniform as $\delta \to 0$.
\end{proof}

\begin{lemma}\label{lem:EquivNorm_Manifold_Omgdel}
	Suppose the Assumption \ref{Assump2} holds. Then for some
	$\delta=\mathcal{O}(q_{X,\M})$ and any real $\beta$ with $0\leq \beta\leq  \lfloor \tau-1/2\rfloor$,
	the following norm equivalency
	\begin{equation}\label{eq:EquivNorm}
		\|u\|_{H^{\beta}(\Omgdel)}\sim \delta^{1/2}\|u\|_{H^{\beta}(\M)},
	\end{equation}
	holds for all trial functions $u\in V_{X,\psi_{\tau},\M}$.
\end{lemma}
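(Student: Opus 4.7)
The plan is to pull back the $\Omgdel$ integral onto the tubular cylinder $\M\times(-\delta,\delta)$ via the diffeomorphism $\cT$ from Definition~\ref{Def:Tubular_domain}, perform a one-dimensional Taylor expansion in the normal variable, and absorb the resulting remainder into the main term by combining the ambient Bernstein inequality on $\Omgdel$ from Theorem~\ref{thm:Bernstein-anyrealNumber_Omeag_delta} with the small-size hypothesis $\delta=\mathcal O(q_{X,\M})$.

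Every trial function $u\in V_{X,\psi_\tau,\M}$ carries a canonical ambient representative $\widetilde u=\sum_j c_j\phi_m(\cdot,x_j)\in V_{X,\phi_m,\Omgdel}$ satisfying $\widetilde u|_\M=u$; identify $u$ with $\widetilde u$ throughout. For the base case $\beta=0$, the Jacobian of $\cT$ is uniformly bounded above and below on $\M\times(-\delta_\M,\delta_\M)$, so that
\[
\|u\|_{L_2(\Omgdel)}^2\;\sim\;\int_\M\!\int_{-\delta}^\delta |u(y+r\mathbf{n}(y))|^2\,\diff r\,\diff\sigma(y).
\]
Writing $u(y+r\mathbf{n}(y))=u(y)+\int_0^r\partial_n u(y+t\mathbf{n}(y))\,\diff t$ and applying Cauchy--Schwarz to the remainder, integration in $r$ produces
\[
\|u\|_{L_2(\Omgdel)}^2\;\lesssim\;\delta\,\|u\|_{L_2(\M)}^2+\delta^2\,\|\partial_n u\|_{L_2(\Omgdel)}^2,
\]
while the reverse identity $u(y)=u(y+r\mathbf{n}(y))-\int_0^r\partial_n u\,\diff t$, integrated in $r\in(-\delta,\delta)$, gives
\[
\delta\,\|u\|_{L_2(\M)}^2\;\lesssim\;\|u\|_{L_2(\Omgdel)}^2+\delta^2\,\|\partial_n u\|_{L_2(\Omgdel)}^2.
\]
Theorem~\ref{thm:Bernstein-anyrealNumber_Omeag_delta}, with constants uniform as $\delta\to 0$, supplies $\|\partial_n u\|_{L_2(\Omgdel)}\le\|u\|_{H^1(\Omgdel)}\lesssim q_{X,\M}^{-1}\|u\|_{L_2(\Omgdel)}$. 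Since $\delta=\mathcal O(q_{X,\M})$ with a sufficiently small implicit constant, the error $\delta^2\|\partial_n u\|_{L_2(\Omgdel)}^2$ becomes a small multiple of $\|u\|_{L_2(\Omgdel)}^2$ and can be absorbed into the left-hand side of both inequalities, producing the $\beta=0$ equivalence.

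For an integer $\beta\in[1,\lfloor\tau-1/2\rfloor]=[1,\lfloor m\rfloor-1]$, I would run the same one-dimensional Taylor estimate on every partial derivative of total order at most $\beta$, after decomposing the derivative in tubular coordinates into tangential and normal components. Tangential differentiation commutes with the pullback under $\cT$ up to bounded geometric factors coming from the second fundamental form of $\M$, which gives an exact analogue of the base-case bound with error term $\delta^2\|u\|_{H^{\beta+1}(\Omgdel)}^2$. Invoking Theorem~\ref{thm:Bernstein-anyrealNumber_Omeag_delta} again to upgrade $\|u\|_{H^{\beta+1}(\Omgdel)}$ to $q_{X,\M}^{-1}\|u\|_{H^\beta(\Omgdel)}$ requires $\beta+1\le\lfloor m\rfloor$, which is precisely the hypothesis on $\beta$; absorbing the small term as before closes both directions.

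The principal technical obstacle will be the clean tangential/normal decomposition of partial derivatives on $\Omgdel$, together with the commutator estimates between these decompositions and the pullback by $\cT$: the Jacobian and Weingarten terms that appear depend on the geometry of $\M$ but remain uniformly bounded as $\delta\to 0$, so their contribution is only that of geometric constants. A secondary subtlety is that individual high-order derivatives $D^\alpha u$ are \emph{not} themselves kernel trial functions, which is why Bernstein has to be invoked on $u$ itself to access $\|u\|_{H^{\beta+1}(\Omgdel)}$ rather than on the derivatives one-by-one; this is precisely what forces the admissible range of $\beta$ to be capped at $\lfloor\tau-1/2\rfloor$.
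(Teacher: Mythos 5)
Your overall strategy mirrors the paper's proof closely: pullback via the tubular coordinates $\cT$, one-dimensional Taylor expansion in the normal variable, absorption of the remainder via the ambient Bernstein inequality on $\Omgdel$ together with $\delta=\mathcal O(q_{X,\M})$, then promotion to higher $\beta$ by applying the same argument to the derivatives. The paper in fact formalizes your ``reverse identity'' as a standalone Poincar\'e-type inequality (Lemma~\ref{lem:Poincare_ineq}), and it closes the case $\beta>0$ fractional by Sobolev interpolation, a step you omit.

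There is one concrete mis-step in your higher-order argument that you should repair. You claim to ``invoke Theorem~\ref{thm:Bernstein-anyrealNumber_Omeag_delta} again to upgrade $\|u\|_{H^{\beta+1}(\Omgdel)}$ to $q_{X,\M}^{-1}\|u\|_{H^\beta(\Omgdel)}$'' under the condition $\beta+1\le\lfloor m\rfloor$. But Theorem~\ref{thm:Bernstein-anyrealNumber_Omeag_delta} (and the underlying Theorem~\ref{thm:Bernstein-anyrealNumber}) is the $H^s\to L_2$ form $\|u\|_{H^s(\Omgdel)}\lesssim q^{-s}\|u\|_{L_2(\Omgdel)}$, and the paper accordingly invokes $|u|_{H^{\beta+1}(\Omgdel)}\lesssim q^{-(\beta+1)}\|u\|_{L_2(\Omgdel)}$ in that step. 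The single-step reduction $\|u\|_{H^{\beta+1}(\Omgdel)}\lesssim q^{-1}\|u\|_{H^\beta(\Omgdel)}$ that your absorption actually uses is the \emph{Bernstein~I} estimate of Theorem~\ref{lem:InvEstimate}, whose hypothesis is $\beta+1\in(d/2,m]$, not $\beta+1\le\lfloor m\rfloor$. These are genuinely different ranges: for $d_\M\ge 3$ (so $d\ge 4$) there exist admissible $\beta$ in $[1,\lfloor\tau-1/2\rfloor]$ with $\beta+1\le d/2$, where the $H^{\beta+1}\to H^\beta$ chain you rely on is not produced by any result in the paper. (The mixed-term estimate of Theorem~\ref{thm:BernsteinIneq} does give $|u|_{H^{\beta+1}}\lesssim q^{-1}\|u\|_{H^\beta}+q^{-(\beta+1)}\|u\|_{L_2}$ for all $\beta+1\le\lfloor m\rfloor$, but the second term is not absorbable by the argument you sketch.) You should either restrict to $\beta+1>d/2$ and quote Theorem~\ref{lem:InvEstimate}, or follow the paper and carry the $H^{\beta+1}\to L_2$ form, spelling out how that form feeds back into the absorption — the paper itself is terse here, so this is worth writing out carefully rather than waving at ``as before.''
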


We first prove a Poincar\'{e}-type inequality needed for the proof of the lemma.

\begin{lemma}\label{lem:Poincare_ineq}
	Let $f\in C^1([-\delta,\delta])$ and $p\geq 1$. Then
	$$|f(0)|^p \leq  \frac{2^{p-1}}{2\delta}\int_{-\delta}^{\delta}|f(r)|^p \mathrm{d} r+ 2^{p-1}\delta^{p-1} \int_{-\delta}^{\delta} |f'(r)|^p \mathrm{d} r.$$
\end{lemma}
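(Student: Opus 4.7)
The plan is to base the argument on the fundamental theorem of calculus, apply H\"older to control the gradient term, use the standard convexity inequality to pass to the $p$-th power, and finish by averaging over the interval $[-\delta,\delta]$.

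First I would write, for any $r\in[-\delta,\delta]$,
\[
f(0) = f(r) - \int_0^r f'(s)\,\mathrm{d}s,
\]
so by the triangle inequality
\[
|f(0)| \leq |f(r)| + \left|\int_0^r f'(s)\,\mathrm{d}s\right|.
\]
At this step, rather than taking the $p$-th power immediately, I apply H\"older's inequality to the one-dimensional integral with $|r|\leq\delta$:
\[
\left|\int_0^r f'(s)\,\mathrm{d}s\right|
\leq |r|^{1-1/p}\left(\int_{-\delta}^{\delta} |f'(s)|^p\,\mathrm{d}s\right)^{1/p}
\leq \delta^{1-1/p}\left(\int_{-\delta}^{\delta} |f'(s)|^p\,\mathrm{d}s\right)^{1/p}.
\]
Doing the H\"older step before taking powers is what yields the tight constant $2^{p-1}$ instead of $2^{2p-2}$; this is the only subtle point in the whole argument.

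Next I combine these two bounds and raise to the $p$-th power using the convexity inequality $(a+b)^p \leq 2^{p-1}(a^p+b^p)$ (valid for $p\geq 1$ by the convexity of $x\mapsto x^p$ on $[0,\infty)$). This gives the pointwise estimate
\[
|f(0)|^p \leq 2^{p-1}|f(r)|^p + 2^{p-1}\delta^{p-1}\int_{-\delta}^{\delta} |f'(s)|^p\,\mathrm{d}s,
\]
valid for every $r\in[-\delta,\delta]$. Finally I average this inequality in $r$ over $[-\delta,\delta]$, i.e.\ integrate in $r$ and divide by $2\delta$. The right-hand second term is independent of $r$ and so is unchanged, while the first term becomes $\tfrac{2^{p-1}}{2\delta}\int_{-\delta}^{\delta}|f(r)|^p\,\mathrm{d}r$, producing precisely the claimed bound. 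No serious obstacle is expected; the only item worth flagging for the reader is the ordering of H\"older and the power $p$, since reversing them inflates the constant in the gradient term by a factor $2^{p-1}$.
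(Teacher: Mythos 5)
Your proof is correct and follows essentially the same route as the paper: fundamental theorem of calculus, H\"older on the gradient integral, the convexity inequality $(a+b)^p\leq 2^{p-1}(a^p+b^p)$, and averaging over $r\in[-\delta,\delta]$. The only cosmetic difference is that you enlarge the domain of the H\"older integral to the full interval $[-\delta,\delta]$ before averaging, so that the gradient term is already $r$-independent, whereas the paper keeps $\int_0^r |f'(\tau)|^p\,\mathrm{d}\tau$ and handles the resulting double integral after averaging; both yield identical constants. One side remark in your write-up is incorrect, though: the order in which you apply H\"older and the convexity inequality has no effect on the constant. Applying convexity first to get $|f(0)|^p\le 2^{p-1}\bigl(|f(r)|^p+\bigl|\int_0^r f'\bigr|^p\bigr)$ and then H\"older on the second term also produces $2^{p-1}|r|^{p-1}\int_0^r|f'|^p$, not $2^{2p-2}$; indeed that is exactly the order the paper uses. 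The factor $2^{2p-2}$ would only appear if one applied the convexity inequality twice, which neither argument does.
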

\begin{proof}
	For any $r\in[-\delta,\delta]$, we have
	$f(0) = f(r) - \int_0^r f'(\tau) \d\tau$ by the fundamental theorem of calculus.
	Then applying the convexity inequality $|a+b|^p\leq 2^{p-1}(|a|^p+|b|^p)$ and H\"{o}lder's inequality, we obtain
	\begin{align*}
		|f(0)|^p \leq  2^{p-1} \Big( |f(r)|^p + \Big| \int_0^r f'(\tau)\d\tau \Big|^p \Big)
		\leq 2^{p-1}|f(r)|^p+2^{p-1}|r|^{p-1}\int_0^r |f'(\tau)|^p \d\tau.
	\end{align*}
	By integrating the above inequality over \( r \in [-\delta, \delta] \) and dividing it by \( 2\delta \), we have
	\begin{align*}
		|f(0)|^p \leq \frac{2^{p-1}}{2\delta} \int_{-\delta}^\delta |f(r)|^p \d r + \frac{2^{p-1}}{2\delta} \int_{-\delta}^\delta |r|^{p-1} \Big( \int_0^r |f'(\tau)|^p \d\tau \Big) \d r.
	\end{align*}
	Since \( |r|^{p-1} \leq \delta^{p-1} \), and by Fubini's theorem, the second term simplifies as
	\begin{align*}
		\int_{-\delta}^\delta |r|^{p-1} \Big( \int_0^r |f'(\tau)|^p \d\tau \Big) \d r \leq \delta^{p-1} \int_{-\delta}^\delta \int_{-\delta}^\delta |f'(\tau)|^p \d\tau \d r
		= 2\delta \cdot \delta^{p-1} \int_{-\delta}^\delta |f'(r)|^p \d r.
	\end{align*}
	Substituting back yields the desired inequality.
\end{proof}

\begin{proof}[\textbf{Proof of Lemma \ref{lem:EquivNorm_Manifold_Omgdel}}]
	
	By Definition \ref{Def:Tubular_domain}, the tubular neighborhood $\Omgdel$ admits a diffeomorphism via the map $\cT$.
	First, we consider $\beta=0$.
	Applying the coarea formula and the change of variables $x=y+r\mathbf{n}(y)$ yields
	\begin{align}\label{eq:IntManifold_Ineq2}
		\int_{\Omega_\delta}|u(x)|^2\d x=\int_{-\delta}^{\delta}\int_{\M}|u(y+r\mathbf{n}(y))|^2|\det(J_{\cT}(y,r))|\d\mu\d r,
	\end{align}
	where $J_{\cT}(y,r)$ is the Jacobian matrix of the map $\cT$.
	There exist constants $0<c_{\cT}<C_{\cT}<\infty$ depending solely on $\M$ such that
	$c_{\cT}\leq |\det(J_{\cT}(y,r))|\leq C_{\cT}$
	for all $y\in\M$ and  $\delta<\delta_{\M}$.
	We write $u(y+r\mathbf{n}(y))=u(y)+\hat{u}(y,r)$ with
	$\hat{u}(y,r)=\int_{0}^r\partial_su(y+s\mathbf{n}(y))\d s$.
	Putting this into \eqref{eq:IntManifold_Ineq2}, we can use the upper Jacobian bound to derive
	\begin{align*}
		\|u\|_{L_2(\Omega_\delta)}^2\leq &~ C_{\cT}\int_{-\delta}^{\delta}\int_{\M}|u(y+r\mathbf{n}(y))|^2\d\mu\d r\\
		\leq & ~2C_{\cT}\int_{-\delta}^{\delta}\int_{\M} \big(|u(y)|^2+|\hat{u}(y,r)|^2\big)\d\mu\d r\\
		\leq & ~4C_{\cT}\delta\|u\|_{L_2(\M)}^2+2C_{\cT}\int_{-\delta}^{\delta}\int_{\M}|\hat{u}(y,r)|^2\d\mu\d r.
	\end{align*}
	Applying Cauchy-Schwartz inequality to the second term on the right-hand side gives
	$$|\hat{u}(y,r)|^2=\Big(\int_{0}^r\partial_su(y+s\mathbf{n}(y))\d s\Big)^2\leq |r|\int_{0}^{|r|}|\partial_su(y+s\mathbf{n}(y))|^2\d s.$$
	Because $|r|\leq \delta$, we obtain
	\begin{align*}
		\int_{-\delta}^{\delta}\int_{\M}|\hat{u}(y,r)|^2\d\mu\d r
		\leq &~\delta\int_{-\delta}^{\delta}\int_{\M} \int_{0}^{\delta}|\partial_su(y+s\mathbf{n}(y))|^2\d s\d\mu \d r\\
		\leq &~ 2\delta^2\int_{\M} \int_{0}^{\delta}|\partial_su(y+s\mathbf{n}(y))|^2\d s\d\mu
		\leq 2c_\cT^{-1}\delta^2\|\nabla u\|_{L_2(\Omega_\delta)}^2.
	\end{align*}
	Combining above results leads to
	$$\|u\|_{L_2(\Omega_\delta)}\leq 2C_{\cT}^{1/2}\delta^{1/2}\|u\|_{L_2(\M)}+2C_{\cT}^{1/2}c_{\cT}^{-1/2}\delta\|\nabla u\|_{L_2(\Omega_\delta)}.$$
	Furthermore, since we assume that $u\in V_{X,\psi_\tau,\M}$ with the restricted kernel $\psi_m$, the function $u$ is also well-defined on the tubular domain for sufficiently small $\delta$. Thus, we can apply Bernstein inequality from Theorem \ref{thm:Bernstein-anyrealNumber}
	and Theorem \ref{thm:Bernstein-anyrealNumber_Omeag_delta} to obtain
	$$\|\nabla u\|_{L_2(\Omega_\delta)}
	\sim |u|_{H^1(\Omega_\delta)}
	\leq C_{d,\psi_\tau,\M}q_{X,\Omega_{\delta}}^{-1}\| u\|_{L_2(\Omega_\delta)}$$
	and thus
	$$\|u\|_{L_2(\Omega_\delta)}\leq 2C_{\cT}^{1/2}\delta^{1/2}\|u\|_{L_2(\M)}+2C_{\cT}^{1/2}c_{\cT}^{-1/2}C_{d,\psi_\tau,\M}\cdot\delta \cdot q_{X,\Omega_{\delta}}^{-1}\| u\|_{L_2(\Omega_\delta)}.$$
	If $\delta=\mathcal{O}(q_{X,\M})$ is chosen so that
	\begin{equation}\label{eq:delta=q}
		2C_{\cT}^{1/2}c_{\cT}^{-1/2}C_{d,\psi_\tau,\M}\cdot\delta \cdot q_{X,\Omega_{\delta}}^{-1}<\frac{1}{2},
	\end{equation}
	then the second term can be moved to the left–hand side, yielding the upper bound
	$\|u\|_{L_2(\Omega_\delta)}\lesssim ~\delta^{1/2}\|u\|_{L_2(\M)}$.
	
	For the lower bound, we use Lemma \ref{lem:Poincare_ineq} with $u(y+r\mathbf{n}(y))$, $p=2$ and integrate it on $\M$ to get
	\begin{align*}
		\int_{\M}|u(y)|^2\d\mu\leq &~ \delta^{-1}\int_{\M}\int_{-\delta}^{\delta}|u(y+r\mathbf{n}(y))|^2\d r\d\mu+2\delta\int_{\M}\int_{-\delta}^{\delta} |\partial_ru(y+r\mathbf{n}(y))|^2 \d r\d\mu\\
		\leq &~ 2c_{\cT}^{-1}\Big(\delta^{-1}\|u\|_{L_2(\Omega_\delta)}^2+\delta\|\nabla u\|_{L_2(\Omega_\delta)}^2\Big)\\
		\leq &~2c_{\cT}^{-1}\Big(\delta^{-1}\|u\|_{L_2(\Omega_\delta)}^2+C_{d,\psi_\tau,\M}\cdot \delta \cdot q_{X,\Omgdel}^{-2} \| u\|_{L_2(\Omega_\delta)}^2\Big).
	\end{align*}
	Thus, for the same $\delta$ satisfying \eqref{eq:delta=q}, we conclude that
	$\|u\|_{L_2(\M)} \lesssim ~ \delta^{-1/2}\| u\|_{L_2(\Omega_\delta)}$.
	Combining above results, we complete the proof of the equivalence relation \eqref{eq:EquivNorm} for the case $\beta=0$.
	
	The result for high-order derivatives follows similarly by replacing $u$ with $D^{\nu}u$, where $|\nu|=\beta$. This yields the equivalence relation
	$|u|_{H^{\beta}(\Omgdel)} \sim \delta^{1/2}|u|_{H^{\beta}(\M)}.$
	In the proof of this equivalence with the same logical steps,  we require the Bernstein inequality $| u|_{H^{\beta+1}(\Omgdel)}\lesssim q_{X,\Omgdel}^{-(\beta+1)}\| u\|_{L_2(\Omgdel)}$
	to hold for $\beta+1$. To apply Theorem \ref{thm:Bernstein-anyrealNumber} for $\beta+1$, we need $\beta+1\leq \lfloor m\rfloor$. This implies that $0\leq \beta\leq \lfloor m-1\rfloor$, or equivalently $0\leq \beta\leq \lfloor \tau-1/2\rfloor$ since $\tau=m-1/2$. Note that fractional values of $\beta$ can be obtained using interpolation theory in Sobolev spaces (see, e.g., \cite{brenner2008mathematical}).
	Finally, applying the Sobolev norm equivalence from \cite[Proposition 2.2]{brezis2018gagliardo} $$\|u\|_{H^{\beta}(\Omgdel)}\sim\big(\|u\|_{L_2(\Omgdel)}^2+|u|_{H^{\beta}(\Omgdel)}^2\big)^{1/2},$$
	completes the proof.
\end{proof}

\begin{lemma}\label{lem:InvIneqManifold}
	Suppose the Assumption \ref{Assump2} holds.
	Then for any real $\beta$ with $0\leq \beta\leq \lfloor \tau-1/2\rfloor$ and real $\eta\in[0,\beta]$, there exists a constant $C=C_{d,\psi_\tau,\beta,\eta,\M}>0$ such that
	\begin{equation}\label{eq:InvIneqManifold1}
		|u|_{H^{\beta}(\M)}\leq C\Big(q_{X,\M}^{-\beta+\eta} \|u\|_{H^{\eta}(\M)}+ q_{X,\M}^{-\beta}\|u\|_{L_2(\M)}\Big)
	\end{equation}
	holds for all trial functions $u\in V_{X,\psi_{\tau},\M}$.
\end{lemma}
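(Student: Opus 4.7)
The plan is to pull the estimate back to the narrow band $\Omega_\delta$, apply the Lipschitz–domain Bernstein inequality of Theorem~\ref{thm:BernsteinIneq} there, and transport the result to $\M$ through the norm equivalence of Lemma~\ref{lem:EquivNorm_Manifold_Omgdel}. Every trial function $u\in V_{X,\psi_\tau,\M}$ has a canonical ambient extension because $\psi_\tau(\cdot,x_j)=\phi_m(\cdot,x_j)|_{\M\times\M}$ and $\phi_m(\cdot,x_j)$ is defined on all of $\R^{d}$; in particular, $u$ is well-defined on $\Omega_\delta$ for every admissible $\delta$.

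Fix $\delta=\mathcal{O}(q_{X,\M})$ small enough that both Lemma~\ref{lem:EquivNorm_Manifold_Omgdel} and the smallness condition~\eqref{eq:delta=q} are satisfied. Applying Theorem~\ref{thm:BernsteinIneq} on $\Omega_\delta$ with $p=q=\varrho=2$, $s=\beta$, and $t=\eta$, and invoking Theorem~\ref{thm:Bernstein-anyrealNumber_Omeag_delta} to keep the constant uniform as $\delta\to 0$, yields
\begin{equation*}
	|u|_{H^{\beta}(\Omega_\delta)} \;\lesssim\; q_{X,\Omega_\delta}^{\,\eta-\beta}\,\|u\|_{H^{\eta}(\Omega_\delta)} \;+\; q_{X,\Omega_\delta}^{\,-\beta}\,\|u\|_{L_2(\Omega_\delta)}.
\end{equation*}
The admissibility $\beta\leq l=\lfloor m\rfloor$ required by that theorem is automatic, since the present hypothesis $\beta\leq\lfloor\tau-1/2\rfloor=\lfloor m-1\rfloor$ is strictly stronger.

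Next, I would apply Lemma~\ref{lem:EquivNorm_Manifold_Omgdel} three times: once on the left to bound $|u|_{H^{\beta}(\M)}\leq\|u\|_{H^{\beta}(\M)}\lesssim\delta^{-1/2}\|u\|_{H^{\beta}(\Omega_\delta)}$, and twice on the right to replace $\|u\|_{H^{\eta}(\Omega_\delta)}$ and $\|u\|_{L_2(\Omega_\delta)}$ by their manifold counterparts, gaining a factor $\delta^{1/2}$ each time. The factors of $\delta^{\pm 1/2}$ cancel exactly, producing the desired inequality but with the ambient separation $q_{X,\Omega_\delta}$ in place of the intrinsic $q_{X,\M}$. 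The final step then replaces $q_{X,\Omega_\delta}$ by $q_{X,\M}$ via the bi-Lipschitz comparability of the intrinsic geodesic metric and the ambient Euclidean metric on a compact smooth submanifold at the relevant small scales, whose constants depend only on the geometry of $\M$.

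The main obstacle is not any single hard estimate but the uniform bookkeeping of all implicit constants as $\delta\to 0$: one must verify that (i) the constants in Theorem~\ref{thm:BernsteinIneq}, Theorem~\ref{thm:Bernstein-anyrealNumber_Omeag_delta}, and Lemma~\ref{lem:EquivNorm_Manifold_Omgdel} remain uniformly bounded under the coupling $\delta=\mathcal{O}(q_{X,\M})$, and that (ii) the admissibility window $\beta\in[0,\lfloor\tau-1/2\rfloor]$ imposed by the norm equivalence fits inside the window $[0,\lfloor m\rfloor]$ required by the domain Bernstein estimate. Both compatibilities have already been recorded in the earlier results, so once the framework is set up the argument reduces to a short chain of substitutions.
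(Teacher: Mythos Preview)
Your proposal is correct and follows essentially the same route as the paper: apply Theorem~\ref{thm:BernsteinIneq} on the tubular neighbourhood $\Omega_\delta$, transport the inequality to $\M$ via the norm equivalence of Lemma~\ref{lem:EquivNorm_Manifold_Omgdel} so that the $\delta^{\pm 1/2}$ factors cancel, and finish with $q_{X,\Omega_\delta}\sim q_{X,\M}$. The only cosmetic difference is that the paper uses the seminorm form $|u|_{H^{\beta}(\M)}\lesssim\delta^{-1/2}|u|_{H^{\beta}(\Omega_\delta)}$ directly (established inside the proof of Lemma~\ref{lem:EquivNorm_Manifold_Omgdel}), whereas you route through the full norm $\|u\|_{H^{\beta}}$; this is harmless since the missing $L_2$ piece is already absorbed by the second term on the right.
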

\begin{proof}
	We can apply the Bernstein inverse inequality from Lemma \ref{thm:BernsteinIneq} to obtain the following estimate on $\Omgdel$,
	\begin{equation*}
		|u|_{H^{\beta}(\Omgdel)}\lesssim  q_{X,\Omgdel}^{-\beta+\eta}\|u\|_{H^{\eta}(\Omgdel)}+ q_{X,\Omgdel}^{-\beta}\|u\|_{L_2(\Omgdel)}.
	\end{equation*}

	With the equivalence from Lemma \ref{lem:EquivNorm_Manifold_Omgdel}, the above inequality can be transformed into
	\begin{equation*}
		\begin{aligned}
			|u|_{H^{\beta}(\M)}\lesssim &~\delta^{-1/2}|u|_{H^{\beta}(\Omgdel)}\\
			\lesssim &~\delta^{-1/2} \Big(\delta^{1/2}q_{X,\Omgdel}^{-\beta+\eta}\|u\|_{H^{\eta}(\M)}+\delta^{1/2}q_{X,\Omgdel}^{-\beta}\|u\|_{L_2(\M)}\Big)\\
			\lesssim &~q_{X,\Omgdel}^{-\beta+\eta}\|u\|_{H^{\eta}(\M)}+q_{X,\Omgdel}^{-\beta}\|u\|_{L_2(\M)}.
		\end{aligned}
	\end{equation*}
	This completes the proof of \eqref{eq:InvIneqManifold1} by noting that $q_{X,\Omgdel}\sim q_{X,\M}$.
\end{proof}

We are now ready to state our main results about inverse inequalities on manifolds. Combining Theorem \ref{thm:BII_manifold_Hvarrho} and Lemma  \ref{lem:InvIneqManifold}, we have the following Bernstein inequality on manifolds.
\begin{theorem}(\textbf{Bernstein inequality: II})\label{thm:Bernstein_Manifolds}
	Suppose the Assumption \ref{Assump2} holds. Then
	for two cases of $\beta\in\R$:
	\begin{flalign}
		&d_\M/2<\beta\leq \tau, \quad\mbox{or}\quad 
		0\leq \beta\leq \lfloor \tau-1/2\rfloor,  \label{eq:BernIneq_manifold_scond2}
	\end{flalign}
	there exists a constant $C=C_{d,\psi_\tau,\beta,\M}>0$ such that
	\begin{equation}\label{eq:Bii_manifolds}
		\|u\|_{H^\beta(\M)}\leq Ch_{X,\M}^{-\beta}\|u\|_{L_2(\M)}
	\end{equation}
	holds for all  trial functions $u\in V_{X,\psi_\tau,\M}$.
	In particular, if we further assume that $\tau\geq\lceil d_\M/2\rceil+1/2$, then the Bernstein inverse inequality \eqref{eq:Bii_manifolds} holds for any real $\beta\in[0,\tau]$.
\end{theorem}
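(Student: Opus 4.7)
The plan is to derive the two regimes of $\beta$ separately from the results already developed, and then to combine them through the condition $\tau\ge \lceil d_\M/2\rceil+1/2$ to cover the full range $[0,\tau]$. Throughout, quasi-uniformity of $X$ gives $h_{X,\M}\sim q_{X,\M}$, so bounds phrased in terms of $q_{X,\M}$ may be rewritten with $h_{X,\M}$ without loss of generality.

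For the first regime $d_\M/2<\beta\le\tau$, the result is essentially immediate: I would apply Theorem~\ref{thm:BII_manifold_Hvarrho} with $\eta=0$, which is admissible since $\eta=0\in[0,\beta]$, to obtain
\begin{equation*}
\|u\|_{H^\beta(\M)}\le C\,q_{X,\M}^{-\beta}\|u\|_{L_2(\M)}\lesssim h_{X,\M}^{-\beta}\|u\|_{L_2(\M)}
\end{equation*}
for every $u\in V_{X,\psi_\tau,\M}$, with a constant depending only on $d,\psi_\tau,\beta,\M$.

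For the second regime $0\le\beta\le\lfloor\tau-1/2\rfloor$, I would invoke Lemma~\ref{lem:InvIneqManifold} with $\eta=0$, which gives a bound on the seminorm
\begin{equation*}
|u|_{H^\beta(\M)}\le C\bigl(q_{X,\M}^{-\beta}+q_{X,\M}^{-\beta}\bigr)\|u\|_{L_2(\M)}\lesssim q_{X,\M}^{-\beta}\|u\|_{L_2(\M)}.
\end{equation*}
Since the full Sobolev norm on $\M$ is equivalent to $\|u\|_{L_2(\M)}+|u|_{H^\beta(\M)}$, and since for sufficiently fine $X$ one has $1\lesssim q_{X,\M}^{-\beta}$ when $\beta\ge 0$, the seminorm estimate promotes to a full-norm estimate, again yielding \eqref{eq:Bii_manifolds}. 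The step I expect to require the most care here is handling the $\beta=0$ case, in which the seminorm is trivial and the inequality reduces to $\|u\|_{L_2(\M)}\le C\|u\|_{L_2(\M)}$; this is trivially true, but the constant has to be tracked uniformly in $\beta$ across a neighborhood of zero.

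Finally, for the claim under the extra smoothness $\tau\ge\lceil d_\M/2\rceil+1/2$, the two regimes together have to cover all of $[0,\tau]$. The condition $\tau\ge\lceil d_\M/2\rceil+1/2$ is equivalent to $\lfloor\tau-1/2\rfloor\ge\lceil d_\M/2\rceil$, so that the closed interval $[0,\lfloor\tau-1/2\rfloor]$ from the second regime meets the open interval $(d_\M/2,\tau]$ from the first regime, and their union equals $[0,\tau]$. Selecting the appropriate regime for any given $\beta\in[0,\tau]$ and invoking the estimate proved in the corresponding case above then yields \eqref{eq:Bii_manifolds} for every $\beta\in[0,\tau]$. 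The only subtlety is verifying the parity cases (even versus odd $d_\M$) to confirm that the two intervals really do cover the threshold value $\beta=d_\M/2$; in both cases the ceiling/floor combination works out cleanly.
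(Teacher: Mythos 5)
Your proof is correct and follows the same route as the paper: Theorem~\ref{thm:BII_manifold_Hvarrho} with $\eta=0$ for the range $d_\M/2<\beta\le\tau$, Lemma~\ref{lem:InvIneqManifold} with $\eta=0$ for the range $0\le\beta\le\lfloor\tau-1/2\rfloor$, and the arithmetic observation that $\tau\ge\lceil d_\M/2\rceil+1/2$ makes the two ranges cover $[0,\tau]$. Your explicit handling of the seminorm-to-norm promotion (via $\|u\|_{H^\beta(\M)}\sim\|u\|_{L_2(\M)}+|u|_{H^\beta(\M)}$ and $1\lesssim q_{X,\M}^{-\beta}$ for small fill distance) is a minor but welcome clarification that the paper's proof leaves implicit.
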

\begin{proof}
	Theorem \ref{thm:BII_manifold_Hvarrho} deals with the case $d_\M/2<\beta\leq\tau$. By setting $\eta=0$ in Lemma \ref{lem:InvIneqManifold}, we obtain the Bernstein inverse inequality \eqref{eq:Bii_manifolds} for $0\leq \beta\leq \lfloor \tau-1/2\rfloor$.
	To extend these results to any $\beta\in[0,\tau]$, we need $\lfloor \tau-1/2\rfloor\geq d_\M/2$. This condition leads to the desired requirement on $\tau$.
\end{proof}

\begin{theorem}(\textbf{Nikolskii inequality})\label{thm:Nikol_Manifolds}
	Suppose the Assumption \ref{Assump2} holds.  Then there exists a constant $C=C_{d,\psi_\tau,\M}>0$ such that
	\begin{equation}\label{eq:InvLinf}
		\|u\|_{L_{\infty}(\M)}\leq Ch_{X,\M}^{-d_\M/2}\|u\|_{L_2(\M)}
	\end{equation}
	holds for all  trial functions $u\in V_{X,\psi_\tau,\M}$.
\end{theorem}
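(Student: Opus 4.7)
The plan is to reduce the manifold Nikolskii inequality to the domain Nikolskii inequality (Theorem~\ref{thm:Nikolskii_domain}) applied on the tubular neighborhood $\Omgdel$, and then transfer the $L_2$ norm back to $\M$ via the norm equivalence of Lemma~\ref{lem:EquivNorm_Manifold_Omgdel}. This mirrors exactly the strategy already used to pass from Theorem~\ref{thm:Bernstein-anyrealNumber} to Lemma~\ref{lem:InvIneqManifold}, but with $L_\infty$ in place of the fractional Sobolev norms.

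First, I would fix $\delta=\mathcal{O}(q_{X,\M})$ small enough that Lemma~\ref{lem:EquivNorm_Manifold_Omgdel} applies and that the uniform-in-$\delta$ regime of Theorem~\ref{thm:Bernstein-anyrealNumber_Omeag_delta} holds. Since $V_{X,\psi_\tau,\M}=\mathrm{span}\{\phi_m(\cdot,x_j):x_j\in X\}$ consists of functions defined on all of $\R^{d}$, every $u\in V_{X,\psi_\tau,\M}$ is automatically defined on $\Omgdel$. Because $\M\subset\Omgdel$, we have the trivial pointwise bound $\|u\|_{L_\infty(\M)}\leq \|u\|_{L_\infty(\Omgdel)}$.

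Next, I would apply Theorem~\ref{thm:Nikolskii_domain} on the Lipschitz domain $\Omgdel$ to obtain
\[
\|u\|_{L_\infty(\Omgdel)}\;\lesssim\; h_{X,\Omgdel}^{-d/2}\,\|u\|_{L_2(\Omgdel)}.
\]
Two points need verification. (i) The constant must remain bounded as $\delta\to 0$: this is a direct re-run of the argument in Theorem~\ref{thm:Bernstein-anyrealNumber_Omeag_delta}, since the proof of Theorem~\ref{thm:Nikolskii_domain} uses only the sampling inequality~\eqref{eq:SamplingIneq_Linf} and the Bernstein inverse bound, whose constants depend on the uniform cone angle of $\Omgdel$ and on Jacobian bounds of $\cT$ that are uniform in $\delta$. (ii) The fill distance $h_{X,\Omgdel}$ must be controlled by $h_{X,\M}$: for any $x=y+r\mathbf{n}(y)\in\Omgdel$ with $y\in\M$, $|r|\leq\delta$, and any $x_j\in X$,
$\|x-x_j\|_2\leq |r|+\|y-x_j\|_2\leq \delta+h_{X,\M},$
so that by quasi-uniformity and the choice $\delta\sim q_{X,\M}\sim h_{X,\M}$ we obtain $h_{X,\Omgdel}\lesssim h_{X,\M}$.

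Finally, applying Lemma~\ref{lem:EquivNorm_Manifold_Omgdel} with $\beta=0$ to replace $\|u\|_{L_2(\Omgdel)}$ by $\delta^{1/2}\|u\|_{L_2(\M)}$ and combining gives
\[
\|u\|_{L_\infty(\M)}\;\lesssim\; h_{X,\M}^{-d/2}\cdot\delta^{1/2}\|u\|_{L_2(\M)}\;\sim\; h_{X,\M}^{-(d-1)/2}\|u\|_{L_2(\M)}\;=\;h_{X,\M}^{-d_\M/2}\|u\|_{L_2(\M)},
\]
where we used $\delta\sim h_{X,\M}$ and the codimension-one identity $d_\M=d-1$ from Assumption~\ref{Assump2}. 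The main obstacle is item~(i), the uniform-in-$\delta$ control of the Nikolskii constant on the shrinking tubular band; everything else is routine bookkeeping, and the uniformity argument is morally identical to the one already established for the Bernstein inequality in Theorem~\ref{thm:Bernstein-anyrealNumber_Omeag_delta}.
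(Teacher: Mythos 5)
Your proposal is correct and follows exactly the paper's argument: reduce to the Nikolskii inequality on the tubular band $\Omgdel$ (Theorem~\ref{thm:Nikolskii_domain}), use $\|u\|_{L_\infty(\M)}\leq\|u\|_{L_\infty(\Omgdel)}$, transfer $\|u\|_{L_2(\Omgdel)}$ to $\delta^{1/2}\|u\|_{L_2(\M)}$ via Lemma~\ref{lem:EquivNorm_Manifold_Omgdel} with $\beta=0$, and absorb $\delta^{1/2}\sim h_{X,\M}^{1/2}$ into the exponent to get $h_{X,\M}^{-d_\M/2}$. You in fact flag and dispatch two bookkeeping points (uniformity of the $\Omgdel$-constant and $h_{X,\Omgdel}\lesssim h_{X,\M}$) that the paper's three-line proof leaves implicit.
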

\begin{proof}
	We use the Nikolskii inequality for $\Omgdel$ from Theorem \ref{thm:Nikolskii_domain} to get
	\begin{align*}
		\|u\|_{L_{\infty}(\M)}\leq  \|u\|_{L_{\infty}(\Omgdel)}\lesssim h_{X,\Omgdel}^{-d/2}\|u\|_{L_2(\Omgdel)}.
	\end{align*}
	Then the equivalence relation from Lemma \ref{lem:EquivNorm_Manifold_Omgdel}
	for $\beta=0$ gives
	$\|u\|_{L_2(\Omgdel)}
	\sim\delta^{1/2}\|u\|_{L_2(\M)}.$
	Using $\delta\sim q_{X,\M} \sim h_{X,\M}$ completes the proof.
\end{proof}

\smallskip
\section{Conclusion}
\label{sec:Conclusion}
We extend the Bernstein and Nikolskii inequalities from the literature to kernel-based approximation spaces on bounded Lipschitz domains and compact Riemannian manifolds. Specifically, we establish two Bernstein inequalities and one Nikolskii inequality for Sobolev reproducing kernels in Euclidean domains, and we transfer these results to restricted kernels on embedded manifolds through a new norm equivalence for tubular neighbourhoods.
The proofs require only mild extra smoothness of the kernels, though some bounds still exceed the standard minimal Sobolev-embedding smoothness assumption $>d/2$ (or $>d_\M/2$ for manifolds). Even with this limitation, the new inverse inequalities enrich the analytical toolbox for kernel methods, enabling stability and error analyses that were previously unattainable. The challenge of further reducing the smoothness requirements remains an open problem.

\subsection*{Acknowledgements}
The work of the first author was partially supported by NSFC (No. 12101310), the Fundamental Research Funds for the Central Universities (No. 30923010912), and a Jiangsu Shuangchuang Talent program (No. JSSCTD202449). The work of the second author was supported by the General Research Fund (GRF No. 12301824, 12300922) of Hong Kong Research Grant Council.

\bibliographystyle{plain}
\bibliography{Nikolskii}

\end{document}